
\documentclass[11pt]{article}

\usepackage{tikz}
\usepackage{longtable}

\usepackage{amsmath,amsfonts,amsthm,authblk}
\usepackage{enumerate}
\usetikzlibrary{calc}
\textheight9in \textwidth6in

\hoffset-0.55in \voffset-0.6in

\theoremstyle{plain}
\newtheorem{theorem}{{\bf Theorem}} [section]
\newtheorem{lemma}[theorem]{{\bf Lemma}}
\newtheorem{prop}[theorem]{{\bf Proposition}}

\theoremstyle{definition}
\newtheorem{defn}[theorem]{{\bf Definition}}

\newcommand{\Kd}[1]{{\mathcal K}(#1)}


\usepackage{xhfill}
\newcommand{\ditto}[1][.4pt]{\xrfill{#1}~"~\xrfill{#1}}

\font\bbb=msbm10 scaled\magstep1

\newcommand{\FF}{\mbox{\bbb F}} 
 
\newcommand{\QQ}{\mbox{\bbb Q}} 
\newcommand{\ZZ}{\mbox{\bbb Z}} 

\font\bbb=msbm8 scaled\magstep1

\newcommand{\TPSSDS}{S^{\hspace{.2mm}d-1} \mbox{$\times
\hspace{-3mm}_{-}$} \, S^{\hspace{.1mm}1}}

\newcommand{\TPSST}{S^{\hspace{.2mm}2} \mbox{$\times
\hspace{-2.8mm}_{-}$} \, S^{\hspace{.1mm}1}}

\newcommand{\KBottle}{S^{\hspace{.2mm}1} \mbox{$\times
\hspace{-2.8mm}_{-}$} \, S^{\hspace{.1mm}1}}

 \newcommand{\TPPSS}{\kern.24em \rule width.08em height1.5ex
depth-.08ex \kern-.36em \times}

\newcommand{\mb}{\mathbb}
\newcommand{\tr}[1]{{\cal D}T_#1}
\newcommand{\ovr}[1]{\overrightarrow{#1}}

\author[1] {Benjamin A.~Burton}
\author[2] {Basudeb Datta}
\author[3] {Nitin Singh}
\author[4] {Jonathan Spreer}

\affil[1,4] {School of Mathematics and Physics, The University of Queensland,
Brisbane QLD 4072, Australia. \texttt{bab@maths.uq.edu.au}, \texttt{j.spreer@uq.edu.au}.}
\affil[2] {Department of Mathematics, Indian Institute of Science,
Bangalore 560\,012, India. \texttt{dattab@math.iisc.ernet.in}.}
\affil[3] {IBM India Research Lab, Bangalore 560\,045, India.
\texttt{nitisin1@in.ibm.com}.}

\begin{document}

\title{\bf A construction principle for tight and minimal triangulations of
manifolds}

\date{July 11, 2016}

\maketitle

\hrule

\bigskip

\noindent {\bf Abstract}

\smallskip
{\small Tight triangulations are exotic, but highly regular objects in combinatorial topology. A triangulation is tight if all its piecewise linear embeddings into a Euclidean space are as convex as allowed by the topology of the underlying manifold. Tight triangulations are conjectured to be strongly minimal, and proven to be so for dimensions $\leq 3$. However, in spite of substantial theoretical results about such triangulations, there are precious few examples. In fact, apart from dimension two, we do not know if there are infinitely many of them in any given dimension.

In this paper, we present a computer-friendly combinatorial scheme to obtain tight triangulations, and present new examples in dimensions three, four and
five. Furthermore, we describe a family of tight triangulated $d$-manifolds,
with $2^{d-1} \lfloor d / 2 \rfloor ! \lfloor (d-1) / 2 \rfloor !$
isomorphically distinct members for each dimension
$d \geq 2$. While we still do not know if there are infinitely many tight
triangulations in a fixed dimension $d > 2$, this result shows that there are
abundantly many.}

\bigskip

\noindent {\small {\em MSC 2010\,:} 57Q15, 57R05.

\noindent {\em Keywords:} (Embeddings of) abstract simplicial complexes;
Combinatorial manifold; Stacked sphere; Tight triangulation; Strongly minimal
triangulation.
}

\bigskip

\hrule

\section{Introduction}

Convexity is a very important concept in many mathematical fields and plays
a crucial role in countless ground breaking results. However, its use is
fundamentally limited to topological balls and spheres. Nonetheless there
exists a widespread intuitive notion that, given a family of topologically
equivalent (i.e., homeomorphic) spaces, some of them look more convex than
others.

Morse theory captures this intuition in a mathematically more precise way.
Critical points of a Morse function are essentially
places where a topological space (say, embedded in some Euclidean space) does
not look convex. The minimum number of such critical points is determined
by the topology of the topological space. A well-known lower bound of this number
is given by the sum of its Betti numbers. For many topological spaces,
including many standard examples of manifolds (the main focus of this article),
this bound is sharp. Hence, a fixed topological manifold $M$ usually only
requires a very small set of such critical points. In contrast, there exist
arbitrarily complicated distortions of $M$ where Morse functions exhibit an
arbitrarily high number of critical points.

Naturally, a representation of $M$ where a ``typical'' Morse function has
a small number of critical points is, for most applications, preferable over a representation where
Morse functions exhibit a large amount of critical points which are not
``topologically meaningful''.

\smallskip

This motivates the notion of {\em tightness}. Informally, an embedding of a
manifold into a Euclidean space is called {\em tight} if it is ``as convex as
possible'' given its topological properties, i.e., if it does not contain
unnecessary dents or bends. In Morse theoretic terms, tightness means that --
in a way yet to be defined -- all Morse functions of an embedded manifold
exhibit the minimum number of critical points.

In the smooth setting, an embedding of a manifold into a Euclidean space
is tight if intersecting it with a closed half space does not introduce any additional homology, see work by Alexandrov \cite{Alexandrov38ClassClosedSurf},
Milnor \cite{Milnor50RelBettiHypersurfIntGaussCurv}, Chern and
Lashof \cite{Chern57TotCurvImmMnf} and Kuiper \cite{Kuiper59ImmMinTotAbsCurv}.
In particular, balls are tight if and only if they are convex.

In the discrete world, a simplicial complex $X$ is
called {\em tight} if all simplex-wise linear embeddings of $X$ into a
Euclidean space are tight. For example, the three vertex triangulation of $S^1$ (a triangle) is tight, while the four vertex triangulation of $S^1$ (a four-cycle) is not -- it allows a non-tight embedding into the Euclidean plane
as a concave quadrilateral. A comprehensive treatment of the notion of {\em tight triangulations} (i.e., tight simplicial complexes) can be found in~\cite{PSM,Kuehnel99CensusTight}.

Geometrically, tight embeddings are optimal in the sense that they minimise
total absolute curvature. In the combinatorial setting, tight triangulations of manifolds in dimensions $\leq 3$ are known to be {\em strongly
minimal}, i.e., they contain the minimum number of simplices in each
dimension. This result is due to recent work by Bagchi, the second,
and the fourth author \cite{BDS2016}, but had been conjectured
a long time before in arbitrary dimensions by K\"uhnel and
Lutz~\cite{Kuehnel99CensusTight}.

This fact reveals a rare and deep link between the geometry and combinatorics of a (triangulable) topological space, making an analysis of this property
interesting for combinatorial topology as well as adjacent fields. In particular,  determining which manifolds admit tight triangulations and which do not is of central importance.

\begin{itemize}
	
\item
In dimension two, this question can be answered in full generality: An $n$-vertex triangulated surface $S$ is tight if and only if it contains the complete graph \cite{PSM}. Thus, its Euler characteristic must satisfy $\chi (S) = - \frac{1}{6} n (n-7)$. It then follows from work by Jungermann and Ringel \cite{Jungerman80MinTrigs}, and Ringel \cite{Ringel55NonOrientable} that for almost all integer solutions of the right hand side (with $n \geq 4$), the corresponding surfaces admit such an $n$-vertex tight triangulation. The only exception here must be made for the Klein bottle, which does not allow a $7$-vertex triangulation.
	
\item In dimension three, it is known that, apart from the $3$-sphere, at most
boundaries of handlebodies can be tight. Moreover, an $n$-vertex triangulation of a $3$-manifold $M$ is tight
if and only if the first Betti number of $M$ satisfies $\beta_1 (M,\mathbb{Z}_2) = \frac{1}{20}(n-4)(n - 5)$
\cite{BDS2016}. In particular, in dimension three tightness can be decided -- and tight triangulations of handlebodies
can be recognised -- in polynomial time \cite{BBDSS2016}. See Section~\ref{ssec:tntntrigs} for a brief overview of the recent results
leading to this characterisation of tight $3$-manifolds.

\item
In dimension four and beyond, our understanding of manifolds admitting tight triangulations is rather limited. See K\"uhnel~\cite{PSM} and more recent work by Bagchi and the second \& fourth authors~\cite{BDS,sp} for some progress in this direction.
\end{itemize}

Some of the difficulties in fully understanding the implications of tightness
stem from a simple fact. Despite a rich theory, there are very few examples of
tight triangulations for dimensions three and higher. In fact, this has been
repeatedly identified as a major gap of the field. We address this issue
in this paper.

More precisely, we present tight triangulations of several three-, four-, and five-manifolds, together with a general construction principle to find more examples. The method is an adaptation of the construction used by the second and third authors \cite{bdns}, which is more amenable to computer processing.
In addition, we present a set of $2^{d-1} \lfloor d / 2 \rfloor ! \lfloor (d-1) / 2 \rfloor !$ non-isomorphic tight triangulations of a $d$-manifold for every
$d\geq 2$.

The triangulations we obtain
are neighbourly members of the class ${\cal K}(d)$ of locally stacked triangulated manifolds, with each vertex link a
stacked sphere (and thus tight due to Effenberger~\cite{ef}). All our
triangulated manifolds are boundaries of  $(d+1)$-dimensional handlebodies.
The few previously known constructions of such manifolds include the boundary of
the $d$-simplex (the trivial genus zero $d$-handlebody), the family of sphere
bundles due to K\"uhnel~\cite{PSM} (including the famous unique 9-vertex
triangulation of $\TPSST$ of Walkup), and the infinite family constructed
by the second and third authors \cite{bdns}.

In dimension three, our results show that the existing necessary conditions on
the topology of a tight $3$-manifold triangulation (tight $3$-manifolds are
boundaries of handlebodies) are not too far from being sufficient.
For $n \in \{ 9,29,49,69,89,109 \}$ the non-orientable
version of the infinite family of boundaries of handlebodies admit $n$-vertex tight
triangulations. The number of triangulations we found for increasing values
of $n$ furthermore suggests that these new examples are part of an infinite
family of such triangulations.


In higher dimensions we are able to obtain examples similar to the
$3$-dimensional ones suggesting that boundaries of handlebodies satisfying
some necessary conditions for tightness in fact admit tight triangulations in
large numbers (see Theorem \ref{thm:family}).
These higher dimensional examples are also thought to be a first step
towards finding examples of tight triangulations with more diverse topologies
(i.e., examples of {\em non-stacked} tight manifolds).


The new examples of tight triangulations should be seen as a counterpart to
the theory of the field. While the latter typically provides
necessary conditions for manifolds to admit tight triangulations, our examples
provide sufficient conditions to enrich the theory and give new insight into
the link between the combinatorics and the topology of a triangulated manifold.

\subsection*{Acknowledgements}
The authors thank the anonymous referees for useful comments.
This work is supported by DIICCSRTE, Australia (project AISRF06660) \& DST, India (DST/INT/AUS/P-56/2013 (G)) under the Australia-India Strategic Research Fund.
The second author is also supported by the UGC Centre for
Advanced Studies.

\section{Preliminaries}

\subsection{Simplicial complexes and triangulated manifolds}

All simplicial complexes considered in this article are finite and abstract.
By a triangulated manifold, we mean an abstract
simplicial complex whose geometric carrier is a topological
manifold. We consider two complexes to be identical if they are
isomorphic. A $d$-dimensional simplicial complex is called {\em
pure} if all its maximal faces (called {\em facets}) are
$d$-dimensional. We identify a pure simplicial complex with the
set of facets in the complex. Let $X$ be a simplicial complex with set of vertices
$V(X)$. For $W \subset V(X)$, the {\em induced subcomplex $X[W]$} is the subcomplex consisting of all faces of $X$ whose vertices
are contained in $W$.

A $d$-dimensional pure simplicial complex is said to be a {\em weak pseudomanifold} if each of its $(d - 1)$-faces is in at most two facets. For a $d$-dimensional weak
pseudomanifold $X$, the {\em boundary} $\partial X$ of $X$ is the pure subcomplex of $X$ whose facets are those $(d-1)$-dimensional faces of $X$ which are contained in a single facet of $X$. The {\em dual graph} $\Gamma(X)$ of a weak pseudomanifold $X$ is the graph whose vertices are the facets of $X$, and where the vertices
corresponding to any two facets of $X$ are adjacent in $\Gamma(X)$ if these two
facets intersect in a face of codimension one. A {\em pseudomanifold} is a weak
pseudomanifold with a connected dual graph. All connected triangulated manifolds
are automatically pseudomanifolds.

If $X$ is a $d$-dimensional simplicial complex then, for $0\leq i
\leq d$, the {\em $i$-skeleton} of $X$ is defined as
${\rm skel}_{i}(X) := \{\alpha\in X \,: \, \dim(\alpha) = i\}$.
The number of $i$-dimensional faces of $X$ is denoted by $f_i = f_i(X)$.
The vector $f(X) := (f_0, \dots, f_d)$ is called the {\em face
vector} of $X$ and the number $\chi(X) := \sum_{i=0}^{d} (-1)^i
f_i$ is called the {\em Euler characteristic} of $X$. As is well
known, $\chi(X)$ is a topological invariant, i.e., it depends only
on the homeomorphism type of the geometric carrier $|X|$ of $X$.

A simplicial complex $X$ is
said to be {\em $k$-neighbourly} if any $k$ vertices of $X$ form a
$(k-1)$-face of $X$. A 2-neighbourly simplicial complex is also called a
{\em neighbourly} simplicial complex.

A $d$-dimensional simplicial complex $X$ is called {\em minimal}
if $f_0(X) \leq f_0(Y)$ for every triangulation $Y$ of the
geometric carrier $|X|$ of $X$. We say that $X$ is {\em strongly
minimal} if $f_i(X) \leq f_i(Y)$, $0\leq i \leq d$, for all such
$Y$.

\subsection{Stacked and locally stacked triangulated manifolds}

A triangulated $(d+1)$-manifold $N$ with non-empty boundary is said to be
{\em stacked}, if all its interior faces have dimension $\geq d$, i.e.,
${\rm skel}_{d-1}(N) = {\rm skel}_{d-1}(\partial N)$.
A closed triangulated
$d$-manifold $M$ is said to be \textit{stacked} if $M = \partial N$
for some stacked $(d+1)$-manifold $N$.

By a \textit{stacked sphere} we mean a stacked
triangulated sphere. A triangulated manifold is said to be \textit{locally
stacked} if each vertex link is a stacked sphere. The class $\mathcal{K}(d)$
is the set of all locally stacked closed triangulated $d$-manifolds \cite{wa}.

A triangulated ball $B$ is stacked if its dual graph
is a tree. (This can be shown by induction on the number of facets of $B$.)
We make heavy use of this characterisation of stacked-balls
throughout this article.

Let $\overline{{\mathcal K}}(d+1)$ be the class of all simplicial complexes
whose vertex links are stacked $d$-balls.
It is straightforward to show that a stacked $(d+1)$-manifold with boundary is
a member of $\overline{{\cal K}}(d+1)$. Conversely, all
members $N \in \overline{{\mathcal K}}(d+1)$ are stacked. To see this note
that if $N \in \overline{{\cal K}}(d+1)$ then $N$ is a triangulated
$(d+1)$-manifold with boundary, all whose vertices are on the boundary
$\partial N$, and with ${\rm skel}_{d-2}({\rm lk}_N(v)) = {\rm skel}_{d - 2}(\partial ({\rm lk}_N(v)))$ for each vertex $v$ of $N$. This implies ${\rm skel}_{d-1}(N) = {\rm skel}_{d-1}(\partial N)$. Thus, $\overline{{\cal K}}(d+1)$ is the set of stacked $(d+1)$-manifolds with boundary.

Since every stacked $(d+1)$-manifold with boundary is in
$\overline{{\cal K}}(d+1)$, it follows that every closed stacked $d$-manifold
$M$ is a member of $\mathcal{K}(d)$ for $d\geq 2$. In \cite{ka}, Kalai
showed that for $d\geq 4$, all members of $\mathcal{K}(d)$ are stacked.
In contrast, there are locally stacked closed 3-manifolds (such as Example 6.2
in \cite{BDS}) which are not stacked. Recently, the second author and Murai showed
that a closed $d$-manifold is stacked if and only if it is in Walkup's class
${\mathcal H}^{d}$ of manifolds defined in \cite{wa} (see \cite[Corollary 4.9]{dm}).
The following statement follows from that result.

\begin{prop}[Datta-Murai \cite{dm}] \label{prop:dm}
For $d\geq 2$, if $M$ is a closed stacked $d$-manifold then $M$ triangulates
$(S^{d-1}\times S^1)^{\# k}$ or $(\TPSSDS)^{\# k}$ for some $k\geq 0$.
\end{prop}

\subsection{Tight and tight-neighbourly triangulations}
\label{ssec:tntntrigs}

In this section we briefly review the notions of tight, and
tight-neighbourly triangulations,
and discuss how tight and tight-neighbourly triangulations
relate to the concepts of stackedness and local stackedness.
Moreover, we list a number of recent
results that allow us to
characterise tight triangulations and their underlying
topologies in dimension three.

\smallskip

For a field $\FF$, a simplicial complex $X$ is
called {\em tight with respect to} $\FF$ (or {\em $\FF$-tight}) if
(i) $X$ is connected, and (ii) for all induced subcomplexes $Y$ of
$X$ and for all $j\geq 0$, the morphism $H_{j}(Y; \FF) \to
H_{j}(X; \FF)$ induced by the inclusion map $Y \hookrightarrow X$
is injective. If $X$ is $\QQ$-tight then it is $\FF$-tight for all
fields $\FF$ \cite{bd17}. Call $X$ {\em tight}
if it is tight with respect to $\ZZ_2=\mathbb{Z}/2\ZZ$.

For $d\geq 3$, a triangulated $d$-manifold $M$ is called
\textit{tight-neighbourly} if $\binom{\,f_0(M)-d- 1\,}{2} =\binom{d
+2}{2}\beta_1(M; \mathbb{Z}_2)$. Such a triangulation must necessarily be
minimal, but not every manifold can have a tight-neighbourly triangulation.

%
%

The most simple sufficient condition for triangulations to be tight involves
bounded manifolds as well as the condition of being stacked.

\begin{theorem}[Bagchi \cite{BaTight}] \label{prop:bb}
Let $M$ be a neighbourly stacked triangulated $d$-manifold with boundary,
$d\geq 3$. Then $M$ is $\FF$-tight for any field $\FF$.
\end{theorem}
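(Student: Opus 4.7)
The plan is to verify $\FF$-tightness of $M$ via the combinatorial criterion of Bagchi~\cite{BaTight}: a connected simplicial complex $X$ is $\FF$-tight if and only if its $\sigma$-vector and $\mu$-vector coincide in every degree, where $\sigma_j(X;\FF)$ is an averaged sum of reduced Betti numbers over induced subcomplexes and $\mu_j(X;\FF)$ is the analogous average built from vertex links. Tightness will follow once we match these two sequences of invariants.

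First, we dispose of the degree $0$ case using neighbourliness: since $\skel{1}{M}$ is the complete graph on $V(M)$, every nonempty induced subcomplex $M[W]$ is connected, so $H_0(M[W];\FF) \to H_0(M;\FF)$ is injective, $M$ itself is connected, and $\sigma_0(M;\FF) = \mu_0(M;\FF) = 0$.

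Next, we compute $\mu_j(M;\FF)$ using the stacked structure at each vertex. Since $M$ is stacked, each vertex link $\lk{M}{v}$ is a stacked $(d-1)$-ball, as recorded for members of $\overline{\mathcal{K}}(d)$ in Section~2.2. Stacked balls have trees as dual graphs, hence are contractible, and a straightforward induction on the number of facets shows that the $\mu_j$-contribution of $\lk{M}{v}$ depends only on its facet count. Summing over all $f_0(M)$ vertices produces a closed-form expression for $\mu_j(M;\FF)$ in terms of $f_0(M)$, $f_d(M)$, and $d$.

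Finally, we compute $\sigma_j(M;\FF)$ directly from the face vector of $M$. Neighbourliness determines $\skel{1}{M}$ completely, and stackedness gives $\skel{d-2}{M} = \skel{d-2}{\partial M}$. These two constraints reduce the $f$-vector of $M$ to very few degrees of freedom, enabling an explicit calculation of $\sigma_j(M;\FF)$ along the lines of stacked-sphere computations in the earlier literature. Coordinate-wise matching with the formula from the previous step yields $\sigma_j(M;\FF) = \mu_j(M;\FF)$ for all $j$, hence $\FF$-tightness. The main obstacle I expect is this final computation: because $M$ carries a boundary, one must carefully control how induced subcomplexes $M[W]$ interact with $\partial M$ --- itself a closed, potentially topologically nontrivial manifold --- and only after this bookkeeping does the required identity emerge cleanly from the combinatorial data.
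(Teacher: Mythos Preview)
The paper does not contain a proof of this statement: Theorem~\ref{prop:bb} is stated with attribution to Bagchi~\cite{BaTight} and used as a black box, so there is no ``paper's own proof'' to compare against. Your proposal therefore is not so much an alternative to the paper as an attempt to reconstruct the argument of~\cite{BaTight}.

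As a reconstruction, your outline points in the right direction but leaves the decisive step unclear. The $\sigma$/$\mu$ framework is indeed Bagchi's, and the degree~$0$ case via neighbourliness is fine. However, the criterion you invoke is not simply ``$\sigma_j=\mu_j$ for all $j$ $\Leftrightarrow$ tight''; what Bagchi proves is an inequality $\sigma_j\le\mu_j$ together with the statement that equality (summed suitably, and for manifolds with or without boundary) forces tightness. To exploit this one needs the actual values, and here your sketch is vague: you assert that the $\mu$-contribution of a stacked $(d-1)$-ball ``depends only on its facet count'' and that the $\sigma$-vector can be read off from the $f$-vector, but neither claim is justified, and the second is not true in general --- $\sigma_j$ involves Betti numbers of all induced subcomplexes, not just face numbers. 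In Bagchi's actual argument the key input is that for a stacked ball the links are again stacked balls and one can evaluate $\mu$ recursively, while on the $\sigma$ side one uses that a stacked manifold with boundary has vanishing homology in the relevant range so that the Morse-type inequalities collapse. Your final paragraph flags exactly the place where real work is needed (the interaction with $\partial M$), but does not indicate how to carry it out; as written, the proposal is a plan rather than a proof.
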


It is worthwhile mentioning that the boundary of any such bounded neighbourly
stacked triangulation is tight as well. Finding sufficient conditions for
tightness of closed triangulations is slightly more intricate. Firstly,
we have the following result.

\begin{theorem}[Effenberger \cite{ef}] \label{prop:ef}
  For $d\neq 3$, the neighbourly members of $\Kd{d}$ are tight.
\end{theorem}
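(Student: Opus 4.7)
The plan is to split on dimension and leverage Kalai's theorem together with Bagchi's tightness criterion for bounded manifolds (Theorem \ref{prop:bb}). In outline: for $d=2$ a neighbourly member of $\mathcal{K}(2)$ is a complete-graph triangulated surface, which is tight by the two-dimensional characterisation; for $d\geq 4$ we use Kalai's result to promote $M$ from locally stacked to stacked, find a bounded stacked filling $N$ with $\partial N = M$, show $N$ itself is neighbourly, apply Theorem \ref{prop:bb} to conclude $N$ is $\mathbb{F}$-tight, and finally pass from tightness of $N$ to tightness of its boundary $M$.

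For $d=2$: since $M\in\mathcal{K}(2)$ is neighbourly and $\dim M = 2$, the $1$-skeleton is the complete graph on $V(M)$. By the characterisation of tight surfaces recalled in the introduction (an $n$-vertex triangulated surface is tight iff it contains $K_n$), $M$ is tight. For $d\geq 4$: by Kalai's theorem (cited in Section 2.2) every member of $\mathcal{K}(d)$ is stacked, so $M = \partial N$ for some stacked $(d+1)$-manifold $N$ with boundary. As noted in the paper, $N\in\overline{\mathcal{K}}(d+1)$ and every vertex of $N$ lies on $\partial N = M$, so $V(N)=V(M)$; moreover stackedness gives $\mathrm{skel}_{d-1}(N) = \mathrm{skel}_{d-1}(M)$, and in particular the $1$-skeleton of $N$ equals that of $M$, which is complete since $M$ is neighbourly. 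Thus $N$ is a neighbourly stacked $(d+1)$-manifold with boundary; since $d+1\geq 5 \geq 3$, Theorem \ref{prop:bb} applies and shows $N$ is $\mathbb{F}$-tight for every field $\mathbb{F}$.

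The crux — and the step I expect to be the real obstacle — is to upgrade tightness of $N$ to tightness of the closed manifold $M=\partial N$, which is exactly the parenthetical remark in the paper after Theorem~\ref{prop:bb}. My approach is to verify this by hand on induced subcomplexes: given any induced subcomplex $Y\subseteq M$, set $W = V(Y)$ and compare $Y$ with the induced subcomplex $N[W]\subseteq N$. Because $N$ is stacked, $Y = N[W]\cap M$ and $N[W]$ collapses onto $Y$ outside top-dimensional simplices, so one can use the long exact sequence of $(N[W], Y)$ together with Lefschetz duality for the manifold-with-boundary pair $(N,M)$ to identify $H_\ast(N[W],Y;\mathbb{F})$ with a shifted piece of $H_\ast(N[W];\mathbb{F})$; injectivity of $H_\ast(N[W];\mathbb{F})\to H_\ast(N;\mathbb{F})$, which is the tightness of $N$, then forces injectivity of $H_\ast(Y;\mathbb{F})\to H_\ast(M;\mathbb{F})$. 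Combined with the connectedness of $M$, this establishes $\mathbb{F}$-tightness of $M$ and completes the proof. The remaining subtlety is handling the case where $W$ contains very few vertices (so $N[W]$ is just a stacked ball and the duality argument degenerates), but this is easily dealt with directly because induced subcomplexes of a neighbourly stacked ball have trivial reduced homology.
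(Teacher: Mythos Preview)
The paper does not prove this statement; it is quoted from Effenberger \cite{ef} as a black box, so there is no ``paper's proof'' to compare against. Your proposal is therefore an attempt at an independent proof, routed through Bagchi's later Theorem~\ref{prop:bb}.

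The gap is exactly where you flag it: the passage from $\FF$-tightness of the stacked filling $N$ to $\FF$-tightness of its boundary $M=\partial N$. Your Lefschetz-duality sketch does not go through. Lefschetz duality relates $H_k(N,M;\FF)$ to $H^{d+1-k}(N;\FF)$ for the \emph{global} pair; it says nothing about the relative homology of an \emph{induced} pair $(N[W],M[W])$, which is not a manifold pair in general. More concretely, stackedness gives ${\rm skel}_{d-1}(N)={\rm skel}_{d-1}(M)$, so $H_j(M[W])\to H_j(N[W])$ is an isomorphism only for $j\leq d-2$; in degree $d-1$ the extra interior $d$-faces of $N[W]$ can kill classes, so this map is merely surjective. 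Thus tightness of $N$ controls $H_{d-1}(N[W])$, not $H_{d-1}(M[W])$, and since $H_{d-1}(N)=0$ for a $(d+1)$-handlebody while $H_{d-1}(M)$ is typically large (dual to $H_1(M)$), you cannot factor the needed injection $H_{d-1}(M[W])\hookrightarrow H_{d-1}(M)$ through $N$ at all. The ``collapses onto $Y$ outside top-dimensional simplices'' assertion is likewise unjustified: an interior $d$-face of $N$ can have all its vertices in $W$ without either adjacent $(d+1)$-simplex lying in $N[W]$, so it is not a free face and no collapse is available.

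Effenberger's original argument avoids this entirely: it works directly with the local structure (stacked vertex links) and a combinatorial Morse/rsl-function analysis on $M$ itself, never invoking a filling $N$. If you want to repair your route, you would need an honest proof that boundaries of neighbourly stacked manifolds are tight---which is essentially the content of the theorem you are trying to prove.
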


Theorem~\ref{prop:ef} is not true for $d=3$ (see for example
\cite[Example 6.2]{BDS}). However, the following more restrictive
version holds in the three-dimensional case. This is where the notion of
tight-neighbourly triangulations comes into play.

\begin{theorem}[Burton-Datta-Singh-Spreer \cite{bdss}] \label{prop:bdss}
Let $X$ be a tight-neighbourly triangulated $3$-manifold. Then $X$ is tight.
\end{theorem}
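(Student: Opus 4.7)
The plan is to reduce to Bagchi's Theorem \ref{prop:bb} by producing a bounded neighbourly stacked $4$-manifold whose boundary is $X$. Schematically, the route is: tight-neighbourly $\Rightarrow$ $X$ is a neighbourly stacked $3$-manifold $\Rightarrow$ $X$ bounds a neighbourly stacked $4$-manifold $N$ $\Rightarrow$ $N$ is tight $\Rightarrow$ $\partial N = X$ is tight.

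First, starting from the tight-neighbourly identity $\binom{f_0(X)-4}{2} = 10\,\beta_1(X;\mathbb{Z}_2)$ combined with classical lower bound inequalities for triangulated $3$-manifolds (Walkup's inequality relating $f_1$, $f_0$ and $\beta_1$), I would deduce that two inequalities must be saturated: the neighbourliness inequality $f_1(X) \leq \binom{f_0(X)}{2}$, and the 3-dimensional lower bound of the form $f_1(X) \geq 4 f_0(X) - 10 + 10\,\beta_1(X;\mathbb{Z}_2)$. Sharpness of the first forces $X$ to be $2$-neighbourly. Sharpness of the second is the Kalai/Walkup characterisation of stackedness for a closed $3$-manifold, and hence $X = \partial N$ for some stacked $4$-manifold $N$ with non-empty boundary.

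Next, I would transfer neighbourliness from $X$ to $N$. Since $N$ is stacked, every vertex of $N$ lies on $\partial N = X$, and ${\rm skel}_1(N) = {\rm skel}_1(\partial N) = {\rm skel}_1(X)$; combined with the $2$-neighbourliness of $X$, this forces $N$ itself to be $2$-neighbourly. Theorem \ref{prop:bb} then applies directly to $N$: it is $\mathbb{F}$-tight for every field $\mathbb{F}$, and the remark immediately after that theorem extends tightness from $N$ to its boundary $\partial N = X$, giving the desired conclusion.

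The main obstacle is the first step. It is precisely the failure of the implication ``locally stacked $\Rightarrow$ stacked'' in dimension three (cf.\ the discussion preceding Proposition \ref{prop:dm}, with \cite[Example 6.2]{BDS} as a witness) that makes Effenberger's Theorem \ref{prop:ef} break down for $d = 3$. The role of the tight-neighbourly identity is to tighten the numerical situation just enough that the sharp form of the lower bound theorem kicks in, ruling out the exotic locally-stacked-but-not-stacked examples. Carrying this out rigorously requires the sharp version of the lower bound theorem for $3$-manifolds together with its equality characterisation for stackedness, and this is where the bulk of the technical work lies; once stackedness of $X$ is secured, the reduction to Theorem \ref{prop:bb} is essentially bookkeeping.
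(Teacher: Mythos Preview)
This theorem is not proved in the present paper; it is quoted from \cite{bdss}. So there is no proof here to compare against, but your proposal still has a genuine gap worth naming.

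The problematic step is the last one. You invoke the sentence following Theorem~\ref{prop:bb} (``the boundary of any such bounded neighbourly stacked triangulation is tight as well'') as if it said that tightness of $N$ \emph{implies} tightness of $\partial N$. It does not: it is a parallel assertion about $\partial N$, and in dimension three its only justification in this paper is Theorem~\ref{prop:bds2}. Concretely, tightness of $N$ gives injectivity of $H_j(N[W])\to H_j(N)$; since $N$ and $\partial N$ share only their $2$-skeleton, this controls $H_0$ and $H_1$ of $(\partial N)[W]$, but says nothing about $H_2((\partial N)[W])\to H_2(\partial N)$, which is exactly the delicate part of $3$-dimensional tightness. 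So the passage $N$ tight $\Rightarrow \partial N$ tight is not a bookkeeping step; for $d=3$ it is the content of Theorem~\ref{prop:bds2}, a later and strictly stronger result than Theorem~\ref{prop:bdss}. In particular, once you have ``$X$ is neighbourly and stacked'' (your step~1, which is Theorem~\ref{prop:Mu}), Theorem~\ref{prop:bds2} already finishes the job directly---the detour through $N$ and Bagchi's theorem is idle.

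The original argument in \cite{bdss} takes a different route entirely: it works locally, analysing the vertex links (stacked $2$-spheres) through a combinatorial invariant of their dual trees (the ``separation index''), and shows that the tight-neighbourly numerics force this invariant to be extremal at every vertex, which in turn yields the required $H_1$-injectivity for induced subcomplexes. That approach predates, and is logically independent of, both Theorem~\ref{prop:Mu} and Theorem~\ref{prop:bds2}.
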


Theorem~\ref{prop:bdss} is complemented by the following equivalence
condition for the locally stacked $3$-dimensional case.

\begin{theorem}[Bagchi-Datta \cite{bd16}]\label{prop:bd}
If $X$ is a neighbourly member of $\Kd{3}$, then $X$ is tight if and only if
$X$ is tight-neighbourly.
\end{theorem}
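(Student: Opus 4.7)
The $(\Leftarrow)$ direction is immediate from Theorem \ref{prop:bdss}: any tight-neighbourly triangulated $3$-manifold is tight, irrespective of whether its vertex links are stacked. So only the $(\Rightarrow)$ direction requires work.

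For the converse, assume $X \in \mathcal{K}(3)$ is neighbourly and $\ZZ_2$-tight, and set $n := f_0(X)$. Neighbourliness gives $f_1(X) = \binom{n}{2}$, so a direct computation yields
$$g_2(X) \;:=\; f_1(X) - 4 f_0(X) + 10 \;=\; \binom{n-4}{2}.$$
The tight-neighbourly identity is therefore equivalent to $g_2(X) = 10\,\beta_1(X; \ZZ_2)$, and the task reduces to deriving this equation from tightness.

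The plan is to invoke the $\mu$-vector machinery for simplicial tightness developed in earlier work of Bagchi and the second author, which characterises $\ZZ_2$-tightness by the equalities $\mu_i(X; \ZZ_2) = \beta_i(X; \ZZ_2)$ for every $i$. For $X \in \mathcal{K}(3)$ the invariant $\mu_1$ admits an expression as a sum of local contributions over the vertices of $X$, each contribution depending only on the stacked $2$-sphere link at the corresponding vertex. Exploiting that a stacked $3$-ball has a tree as dual graph, so that its boundary $2$-sphere has an especially rigid face-counting structure, I expect these local contributions to telescope into the clean global identity
$$\mu_1(X; \ZZ_2) \;=\; \frac{g_2(X)}{10} \;=\; \frac{1}{10}\binom{n-4}{2}.$$
Combined with the tightness hypothesis $\mu_1 = \beta_1$, this immediately gives $\binom{n-4}{2} = 10\,\beta_1(X; \ZZ_2)$, i.e., tight-neighbourliness.

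The main obstacle is the derivation of the formula $\mu_1(X; \ZZ_2) = g_2(X)/10$ for neighbourly $X \in \mathcal{K}(3)$. The ingredients are: (i) the tree structure of stacked $3$-balls, translated into a combinatorial identity on their boundary $2$-spheres; (ii) the Dehn--Sommerville relations $2 f_2(X) = 4 f_3(X)$ and $\chi(X) = 0$ for closed $3$-manifolds; and (iii) the combinatorial expression of $\mu_1$ as a weighted count of missing faces aggregated across vertex stars. Making the bookkeeping precise, and confirming that the local terms reassemble into $g_2(X)$ with the correct factor of $10$, is the delicate part of the argument; once established, the rest of the theorem follows as above.
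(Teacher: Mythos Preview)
The paper does not supply a proof of this statement: Theorem~\ref{prop:bd} is quoted in the preliminaries as a result of Bagchi and Datta~\cite{bd16}, with no argument given. So there is no in-paper proof to compare against.

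Your outline is nonetheless the right strategy and matches the approach of the cited source. The $(\Leftarrow)$ direction via Theorem~\ref{prop:bdss} is fine in the context of the present paper (though note the anachronism: \cite{bd16} predates \cite{bdss}, so the original argument handled that direction differently). For $(\Rightarrow)$, the route through the $\mu$-vector identity $\mu_1=\beta_1$ under tightness, combined with an explicit evaluation of $\mu_1$ for neighbourly members of $\mathcal{K}(3)$, is exactly how \cite{bd16} proceeds.

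What you have written, however, is a plan rather than a proof. You explicitly label the identity $\mu_1(X;\ZZ_2)=g_2(X)/10$ as the ``main obstacle'' and do not establish it. In \cite{bd16} this step is not routine bookkeeping: it rests on a closed formula for the relevant invariant of a stacked $2$-sphere, obtained from the tree structure of the dual graph of the filling stacked $3$-ball via a separation-index computation, followed by an averaging/double-counting argument over all vertices to assemble the local contributions into $g_2$. Items (ii) and (iii) of your ingredient list are peripheral; the real content is a precise version of (i). Until that formula is derived and the summation carried out, the argument is incomplete.
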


Recently, the following classification theorem was proved in dimension 3.

\begin{theorem}[Bagchi-Datta-Spreer \cite{BDS2016}] \label{prop:bds2}
A triangulated closed $3$-manifold $M$ is $\FF$-tight if and only if $M$ is $\FF$-orientable, neighbourly and stacked.
\end{theorem}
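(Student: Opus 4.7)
The plan is to handle the two implications of the biconditional separately.

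For the sufficient direction, assume $M$ is neighbourly, stacked, and $\FF$-orientable. Since $M$ is stacked, every vertex link is a stacked $2$-sphere, so $M \in \Kd{3}$. I would then invoke the equality case of the lower bound theorem for stacked closed $3$-manifolds, namely $f_1(M) = 4 f_0(M) - 10 + 10\,\beta_1(M;\ZZ_2)$, and substitute the neighbourliness relation $f_1(M) = \binom{f_0(M)}{2}$ to derive
\[
\binom{f_0(M) - 4}{2} \;=\; 10\,\beta_1(M;\ZZ_2),
\]
which is precisely the tight-neighbourly condition. Theorem~\ref{prop:bdss} then gives $\ZZ_2$-tightness of $M$. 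To upgrade to $\FF$-tightness for a general field $\FF$ over which $M$ is orientable, I would combine Proposition~\ref{prop:dm} (which identifies $M$ with $(S^{2}\times S^{1})^{\#k}$ or with $(\TPSST)^{\# k}$) with the $\FF$-orientability assumption, which forces the orientable family when $\mathrm{char}(\FF) \neq 2$, and then verify the injectivity of $H_j(Y;\FF) \to H_j(M;\FF)$ on induced subcomplexes directly, using the well-understood homology of connected sums.

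For the necessary direction, assume $M$ is $\FF$-tight. Neighbourliness is a standard consequence of $\FF$-tightness for closed $d$-manifolds with $d \geq 2$: the injectivity of $H_0(Y;\FF) \to H_0(M;\FF)$ for every two-vertex induced subcomplex $Y$ forces every pair of vertices to span an edge. The key step is then to show that each vertex link of $M$ is a stacked $2$-sphere, i.e., $M \in \Kd{3}$. Granted this, Theorem~\ref{prop:bd} immediately promotes tightness to tight-neighbourliness, and running the algebra of the sufficient direction in reverse gives $f_1 = 4 f_0 - 10 + 10\,\beta_1(M;\ZZ_2)$, which is the equality case characterising stacked closed $3$-manifolds, so $M$ is stacked. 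Finally, $\FF$-orientability follows by appealing once more to Proposition~\ref{prop:dm}: a twisted summand in the classification would contribute nontrivially to $\FF$-homology in odd characteristic in a way that obstructs the injectivity of $H_\ast(\,\cdot\,;\FF) \to H_\ast(M;\FF)$ on a suitable induced subcomplex, so only the $\FF$-orientable family survives.

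The hard part, and the main obstacle, is proving that $\FF$-tightness of $M$ forces each vertex link to be a stacked $2$-sphere. Tightness is a global hypothesis about induced subcomplexes, whereas local stackedness is a purely local statement about each star. I expect the argument to proceed by comparing $M$ with the vertex-deletion subcomplex $M[V(M) \setminus \{v\}]$, exploiting the Mayer-Vietoris relationship between $M$, $\lk{M}{v}$, and this deletion together with the tightness-induced injections on homology, and concluding via the face-number characterisation of stacked $2$-spheres. Once local stackedness is in hand, all remaining steps are essentially algebraic manipulations combined with the already-cited theorems of Bagchi-Datta and Burton-Datta-Singh-Spreer, and with the topological classification of Datta-Murai.
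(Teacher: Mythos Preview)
The paper does not prove this theorem; it is quoted without proof as a result of Bagchi--Datta--Spreer from \cite{BDS2016}. There is therefore no ``paper's own proof'' to compare against. What I can do is assess your sketch on its own terms.

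Your sufficient direction is largely sound but circuitous: once you know $M$ is neighbourly and stacked, Theorem~\ref{prop:Mu} gives tight-neighbourliness in one stroke, so the $f$-vector algebra is unnecessary. The real weakness is the upgrade from $\ZZ_2$-tightness (which is all Theorem~\ref{prop:bdss} delivers) to $\FF$-tightness for an arbitrary field $\FF$ over which $M$ is orientable. Saying you will ``verify the injectivity of $H_j(Y;\FF)\to H_j(M;\FF)$ on induced subcomplexes directly'' is not a proof; this is exactly the content of tightness, and the topological type of $M$ alone does not control the combinatorics of all induced subcomplexes.

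In the necessary direction you have two gaps. First, the theorems you intend to invoke (\ref{prop:bdss} and \ref{prop:bd}) are stated for $\ZZ_2$-tightness, whereas your hypothesis is $\FF$-tightness for a possibly different field; you need to bridge that, and the paper only records the implication $\QQ$-tight $\Rightarrow$ $\FF$-tight, not the other way. Second, and more seriously, you openly concede that the crucial step --- deducing local stackedness of $M$ from tightness --- is something you only ``expect'' to obtain via a Mayer--Vietoris comparison with the antistar. That is indeed the heart of the matter in \cite{BDS2016}, and it requires substantial new inequalities (a sharpening of the sigma/mu-type bounds for $3$-manifolds) rather than a soft homological argument; the antistar idea alone does not force the link to be stacked. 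Until you can supply that step, the necessary direction is not a proof but a plan.
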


Moreover, we have the following connection between stacked and tight-neighbourly
triangulations due to recent work by Murai.

\begin{theorem}[Murai \cite{Mu}] \label{prop:Mu}
Let $X$ be a closed triangulated manifold of dimension $d\geq 3$. Then $X$ is
tight-neighbourly if and only if $X$ is stacked and neighbourly.
\end{theorem}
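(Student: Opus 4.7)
The plan is to reduce the statement to the generalised lower bound theorem for manifolds due to Novik--Swartz, combined with the equality characterisation obtained by Datta--Murai (as invoked earlier in Proposition~\ref{prop:dm}). Specifically, the key input is that for any closed connected triangulated $d$-manifold $X$ with $d \geq 3$ one has
\begin{equation}\label{eq:glbt}
f_1(X) \;\geq\; (d+1)\,f_0(X) \;-\; \binom{d+2}{2} \;+\; \binom{d+2}{2}\,\beta_1(X;\ZZ_2),
\end{equation}
with equality if and only if $X$ is stacked. I would quote this as a black box.

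Next I would carry out the elementary algebraic identity
\begin{equation*}
\binom{f_0}{2} \;-\; (d+1)\,f_0 \;+\; \binom{d+2}{2} \;=\; \binom{f_0-d-1}{2},
\end{equation*}
which one verifies by expanding both sides. Substituting this into \eqref{eq:glbt} rearranges the generalised lower bound inequality into the clean form
\begin{equation*}
\binom{f_0}{2} - f_1(X) \;\leq\; \binom{f_0-d-1}{2} - \binom{d+2}{2}\,\beta_1(X;\ZZ_2),
\end{equation*}
with equality on the right iff $X$ is stacked. On the other hand, the left-hand side is always $\geq 0$, with equality iff $X$ is neighbourly.

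For the forward direction, assume $X$ is tight-neighbourly, so the right-hand side above vanishes. Since the left-hand side is non-negative, both sides vanish, giving simultaneously $f_1 = \binom{f_0}{2}$ (neighbourliness) and equality in \eqref{eq:glbt} (stackedness). For the converse, assume $X$ is neighbourly and stacked. Then the left-hand side vanishes (by neighbourliness) and equality holds in \eqref{eq:glbt} (by stackedness, using the Datta--Murai equality case), so the right-hand side also vanishes, which is exactly the tight-neighbourly condition $\binom{f_0 - d - 1}{2} = \binom{d+2}{2}\beta_1(X;\ZZ_2)$.

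The main obstacle is purely in citing, not deriving, the generalised lower bound theorem together with its equality characterisation; once that is in hand, the remainder is an algebraic pigeonhole argument between two non-negative quantities that must both vanish simultaneously. No case split on dimension is needed beyond the standing assumption $d \geq 3$, which is precisely where \eqref{eq:glbt} and its equality case are valid.
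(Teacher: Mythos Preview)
The paper does not supply its own proof of this statement; Theorem~\ref{prop:Mu} is quoted as a result of Murai~\cite{Mu} and invoked as a black box (in the proof of Theorem~\ref{thm:tight}). So there is no in-paper argument to compare your proposal against.

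That said, your argument is correct and is essentially the route taken in~\cite{Mu}: the Novik--Swartz inequality $g_2(X)\geq\binom{d+2}{2}\beta_1(X;\ZZ_2)$ from~\cite{NS}, together with the characterisation of its equality case (equality iff $X$ is stacked), is exactly what drives the equivalence, and your algebraic identity $\binom{f_0}{2}-(d+1)f_0+\binom{d+2}{2}=\binom{f_0-d-1}{2}$ is valid. One caution on attribution: the equality characterisation you need is not what Proposition~\ref{prop:dm} states---that proposition concerns the topological type of stacked manifolds, not the equality case of the lower bound. The statement ``equality in Novik--Swartz $\Leftrightarrow$ stacked'' is itself proved in~\cite{Mu}, so you are citing the same source for the key ingredient; this is not circular, but it means your write-up is a reconstruction of Murai's own deduction rather than an independent proof.
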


In summary, assuming the necessary condition of being neighbourly, the
properties of stackedness, local stackedness, and tight-neighbourliness
of a triangulated manifold are closely interconnected with the property of
being tight.

\smallskip

For a triangulated $3$-manifold $M$, tightness is thus determined by its
first Betti number. More explicitly, $M$ is tight if and only if
\begin{align*}
10\beta_1(M; \mathbb{Z}_2) & = \binom{\,f_0(M)-4\,}{2}.
\end{align*}

In particular, the first Betti number can be computed in polynomial time, and
thus tightness of $3$-manifolds is polynomial time decidable (see
\cite{BBDSS2016} for an alternative polynomial time algorithm independent of
Theorem~\ref{prop:bds2}). Furthermore, we now know
that any tight triangulation of a $3$-manifold $M$ must be homeomorphic to
$(\TPSSDS)^{\# k}$, or $(S^{d-1}\times S^1)^{\# k}$, where
$k = \binom{\,f_0(M)-4\,}{2}/10$.



\subsection{The complex \boldmath{${\cal K}(G,{\cal T})$}} \label{subsec:K(G,T)}

In this section we describe a representation of a weak
pseudomanifold ${\cal K}$ in terms of its dual graph $G$ and its (stacked)
vertex links, given by a collection of trees ${\cal T}$. This representation
was first used by the second and third authors \cite{bdns}. The complex
${\cal K}(G,{\cal T})$, defined below, is the central object of our construction
principle for tight manifolds presented in this article.

Let $G$ be a graph and ${\cal T}=\{T_i\}_{i\in {\cal I}}$ be a family of
induced subtrees of $G$, such that every vertex of $G$ is contained in
exactly $d+2$ trees of ${\cal T}$ and any two adjacent vertices appear together
in exactly $d+1$ trees of ${\cal T}$. We consider the $(d+1)$-dimensional
simplicial complex ${\cal K}(G,{\cal T})$ given by:
\begin{equation}\label{eq:scdual}
{\cal K}(G,{\cal T}) := \{\{i: u\in T_i\}: u\in V(G)\}.
\end{equation}
We denote the facet $\{i: u\in T_i\}$ by $\hat{u}$ for $u\in V(G)$.
Our construction is based on the following result from \cite{bdns}.

\begin{theorem}\label{thm:construction}
Let $G$ be a graph and ${\cal T} = \{T_i\}_{i=1}^{n}$ be a  family
of $(n-d-1)$-vertex induced subtrees of $G$, any two of which
intersect. Suppose that
\begin{enumerate}[(i)]
  \item each vertex of $G$ is in exactly $d+2$ trees of $\cal T$; and
  \item for any two vertices $u\neq v$ of $G$, $u$ and $v$ are jointly contained
    in $d+1$ members of $\cal T$ if and only if $\langle u,v \rangle$ is an edge
    of $G$.
\end{enumerate}
Then ${\cal K}(G,{\cal T})$ is a neighbourly member of $\overline{\cal K}(d+1)$,
with dual graph $\Gamma({\cal K}(G,{\cal T}))$ isomorphic to $G$.
\end{theorem}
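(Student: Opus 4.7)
The plan is to verify three things in turn: that the map $u \mapsto \hat{u}$ identifies $V(G)$ bijectively with the facets of ${\cal K}(G,{\cal T})$ and realises $G$ as the dual graph, that ${\cal K}(G,{\cal T})$ is neighbourly, and that each vertex link is a stacked $d$-ball.

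First, by condition~(i) each $\hat{u}$ has cardinality $d+2$, so is a $(d+1)$-face of the right dimension. For distinct $u, v \in V(G)$, the cardinality $|\hat{u} \cap \hat{v}|$ counts the trees containing both $u$ and $v$, so condition~(ii) identifies it as equal to $d+1$ precisely when $\langle u,v\rangle$ is an edge of $G$. A short argument using the induced-subtree hypothesis rules out $|\hat{u} \cap \hat{v}| = d+2$ for $u \neq v$: otherwise $u$ and $v$ lie in the same $d+2$ trees, and the $u$-$v$ path inside any common induced subtree produces an intermediate vertex whose joint-incidence counts with $u$ and $v$ contradict~(ii). Hence $u \mapsto \hat{u}$ is injective and $\Gamma({\cal K}(G,{\cal T})) \cong G$. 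Neighbourliness is then immediate: for any two indices $i,j$, the hypothesis that $T_i \cap T_j \neq \emptyset$ gives a vertex $u \in T_i \cap T_j$, whence $\{i,j\} \subseteq \hat{u}$ is a face.

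The heart of the proof, and the step I expect to be the main obstacle, is showing that every vertex link $\mathrm{lk}(i)$ is a stacked $d$-ball. The facets of $\mathrm{lk}(i)$ correspond bijectively to the vertices of $T_i$ via $u \mapsto \hat{u} \setminus \{i\}$, and by the analysis above two such facets share a $(d-1)$-face iff $uv \in E(G)$, hence iff $uv \in E(T_i)$ (using again that $T_i$ is induced). Thus the dual graph of $\mathrm{lk}(i)$ is exactly $T_i$, which is a tree; by the characterisation of stacked balls recalled earlier, it remains only to show that $\mathrm{lk}(i)$ is actually a $d$-ball, which I would do by induction on $|V(T_i)|$. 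The base case $|V(T_i)|=1$ is a single $d$-simplex. For the inductive step, pick a leaf $u$ of $T_i$ with unique neighbour $v$, and let $B'$ be the subcomplex of $\mathrm{lk}(i)$ spanned by $\{\hat{w} \setminus \{i\} : w \in V(T_i) \setminus \{u\}\}$; by induction, $B'$ is a stacked $d$-ball with dual tree $T_i - u$. The crucial verification is that $B' \cap (\hat{u} \setminus \{i\})$ is a single $(d-1)$-simplex: any $(d-1)$-face of $\hat{u} \setminus \{i\}$ lying in $B'$ must sit in some $\hat{w} \setminus \{i\}$ with $w \in V(T_i) \setminus \{u\}$ and $|\hat{u} \cap \hat{w}| \geq d+1$, injectivity forces equality, condition~(ii) forces $uw \in E(G)$, and the induced property forces $w$ to be a neighbour of $u$ in $T_i$; since $u$ is a leaf, $w = v$ and the face is $\hat{u} \cap \hat{v} \setminus \{i\}$. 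Since the $u$-direction from $v$ is absent in $T_i - u$, this $(d-1)$-face lies on the boundary of $B'$, and attaching $\hat{u} \setminus \{i\}$ along it is a valid stacking operation. Hence $\mathrm{lk}(i)$ is a stacked $d$-ball, and ${\cal K}(G,{\cal T}) \in \overline{\cal K}(d+1)$.
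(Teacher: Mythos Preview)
Your overall strategy is sound, but the inductive step for the vertex links has a real gap. To attach the leaf simplex $\hat u\setminus\{i\}$ to $B'$ and still have a ball, you need $B'\cap(\hat u\setminus\{i\})$ to be \emph{exactly} the closed $(d-1)$-simplex $(\hat u\cap\hat v)\setminus\{i\}$; you only check its $(d-1)$-faces. What you have not excluded is that the unique vertex $j_0\in\hat u\setminus\hat v$ lies in some $\hat w$ with $w\in V(T_i)\setminus\{u\}$, i.e.\ that $T_{j_0}$ meets $T_i$ somewhere other than $u$. If it does, the attachment is along a $(d-1)$-simplex \emph{plus} an extra isolated vertex, and the result is not a manifold. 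A pure complex with tree dual graph need not be a ball: the four tetrahedra $\{1,2,3,4\}$, $\{1,2,3,5\}$, $\{2,3,5,6\}$, $\{3,5,6,4\}$ have a path as dual graph, yet the link of $4$ is two triangles meeting in a single vertex. The same omission makes your injectivity sketch incomplete; ``joint-incidence counts contradict~(ii)'' does not by itself produce a contradiction.

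The missing ingredient is the convexity property $\hat a\cap\hat b\subseteq\hat c$ whenever $c$ lies on the $a$--$b$ path in $T_i$; equivalently, $V(T_j)\cap V(T_i)$ is always a subtree of $T_i$. The paper does not prove Theorem~\ref{thm:construction} (it is quoted from \cite{bdns}), but its proof of the variant Lemma~\ref{lem:construction2} supplies exactly this step via a labelling argument: root $T_i$ at any $r$, orient all edges towards $r$, and label each oriented edge $\overrightarrow{\langle a,b\rangle}$ by the unique element of $\hat a\setminus\hat b$. A pigeonhole count against the $n-d-2$ trees not containing $r$ shows the $n-d-2$ labels are pairwise distinct; convexity then follows, since a failure would force the same label on two edge-disjoint paths towards the root. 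Once convexity is available, both your injectivity claim and the leaf-attachment step go through as written.
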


\section{Constructing tight triangulations of handlebodies}
\label{sec:constrPrinciple}

In this section we present a method to construct examples of tight
triangulated $d$-manifolds obtained as boundaries of tight $(d+1)$-dimensional
handlebodies with $n=(d+1)((d+2)k+2)+1$, for $k\geq 0$, vertices and first Betti number
 \begin{equation} \label{eq:Betti}
    \beta_1 = \begin{cases}
	{\binom{(d+1)((d+2)k+1)+1}{2}}/{\binom{d+2}{2}} & \text{ if } d\geq 3, \\
	2\binom{(d+1)((d+2)k+1)+1}{2}/\binom{d+2}{2} & \text{ if } d = 2,
      \end{cases}
  \end{equation}
and with $\mb{Z}_n = \langle (1, 2, \ldots , n) \rangle$ as the group of
automorphisms acting transitively on their sets of vertices
$\{ 1, 2, \ldots, n \}$. The construction principle is defined in
Section~\ref{ssec:proof}.
The main ingredient for the proof of correctness
is a modification of Theorem~\ref{thm:construction}, which we give in
Lemma~\ref{lem:construction2}. The method is feasible for small values of $k$
and leads to several new examples of tight triangulations
presented in Section~\ref{sec:results}.

However, before we can describe the construction principle and the new tight
triangulations we first need to introduce some extra notation.

\subsection{A family of candidates for the dual graph} \label{subsec:dualgr}

Let $k\geq 0$, $n=(d+1)((d+2)k+2)+1$, and let $m_0,m_1,\ldots, m_k$ be
invertible elements in $\mathbb{Z}_n$. Let $(r,\theta)$ denote the point in the
two dimensional plane with polar coordinates $(1+r,\theta)$. Define
$\theta_n=\exp(2\pi/n)$ and let $G(d,k;m_0,\ldots,m_k)$ be the
$n((d+1)k+1)$-vertex ``spiderweb-graph''. This graph consists of sets of
$n$ vertices each lying on $(d+1)k+1$ concentric circles with coordinates
$(i,\ell\theta_n)$, $\ell \in \mathbb{Z}_n$, $1 \leq i \leq (d+1)k+1$. We denote
these vertices by $v_i (\ell)$. As for the edges of $G(d,k;m_0,\ldots,m_k)$ we
have

\begin{enumerate}[(i)]
\item $k+1$ ``orbit'' cycles
$$
C_i := v_{(d+1)i}(0)\mbox{-} v_{(d+1)i} (m_i)\mbox{-} v_{(d+1)i}(2 m_i)\mbox{-}
\cdots \mbox{-} v_{(d+1)i}((n-1)m_i)\mbox{-} v_{(d+1)i}(0), ~~ 0\leq i\leq k,
$$
and

\item $n$ ``radial'' paths
$$
P_j := v_0(j)\mbox{-} v_1(j)\mbox{-} \cdots \mbox{-} v_{(d+1)k}(j), ~~ j\in
\mathbb{Z}_n.
$$
\end{enumerate}

See Figure~\ref{fig:g49} for a picture of the graph with $d=3$, $k=2$, $n=49$,
$m_0=10$, $m_1=6$, and $m_2=1$.

\begin{figure}[ht!]
\centering
\scalebox{1.1}{%
\begin{tikzpicture}[xscale=1.1]
\pgfmathsetmacro{\rA}{4.7}
\pgfmathsetmacro{\rB}{3.1}
\pgfmathsetmacro{\rC}{1.5}
\pgfmathsetmacro{\aA}{7.34}
\pgfmathsetmacro{\aB}{44.081}
\pgfmathsetmacro{\aC}{73.469}

\foreach \i in {0,...,48} {
\draw ({(\aA*\i)}:\rA) -- ({(\aA*(\i+1))}:\rA);
\draw ({(\aB*\i)}:\rB) -- ({(\aB*(\i+1))}:\rB);
\draw ({(\aC*\i)}:\rC) -- ({(\aC*(\i+1))}:\rC);

\draw ({(\aA*\i)}:\rA) -- ({(\aA*\i)}:\rB);
\draw ({(\aA*\i)}:\rB) -- ({(\aA*\i)}:\rC);

\draw ({(\aA*\i)}:\rA) node {$\bullet$};
\draw ({(\aB*\i)}:\rB) node {$\bullet$};
\draw ({(\aC*\i)}:\rC) node {$\bullet$};

\foreach \j in {0.25, 0.50, 0.75} {
\draw ($({(\aA*\i)}:\rA)!\j!({(\aA*\i)}:\rB)$) node {$\circ$};
\draw ($({(\aA*\i)}:\rB)!\j!({(\aA*\i)}:\rC)$) node {$\circ$};
}

}
\end{tikzpicture}
}
\caption{The graph $G(3,2;10,6,1)$}
\label{fig:g49}
\end{figure}

Observe that the automorphism group of the graph $G(d,k;m_0,\ldots,m_k)$
contains $n$ rotations generated by the following permutation (written in
cycle notation):
\begin{equation}\label{eq:auto}
  \varphi := \prod_{i=0}^{(d+1)k} \left ( v_i (0), \, v_i (1), \, \ldots , \,
  v_i (n-1) \right ).
\end{equation}

\subsection{Families of induced subtrees of \boldmath{$G(d,k; m_0,\ldots,m_k)$}}
\label{subsec:subtrees}

To construct neighbourly members of $\overline{\cal K}(d+1)$, we define
families of induced subgraphs of $G(d,k; m_0,\ldots,m_k)$ of a particular type.

For $0\leq i\leq k-1$ and $1\leq \ell\leq d$, we say that the path
$$
v_{(d+1)i} (j)\mbox{-}  v_{(d+1)i+1} (j)\mbox{-} \cdots \mbox{-}
v_{(d+1)i+\ell} (j)
$$
is the {\em outward} path of length $\ell$ at vertex $v_{(d+1)i} (j)$.
Similarly, for $1\leq i\leq k$ and $1\leq \ell\leq d$, we say that the path
$$
v_{(d+1)i} (j) \mbox{-}  v_{(d+1)i-1} (j)\mbox{-}  \cdots \mbox{-}
v_{(d+1)i-\ell} (j)
$$
is the {\em inward} path of length $\ell$ at vertex $v_{(d+1)i} (j)$.

A collection ${\cal D}=\{(\sigma_i,\tau_i):1\leq i\leq k\}$, where
$\sigma_i$ and $\tau_i$ are permutations of the set $\{0,1, \ldots , d\}$, is called
a $k$-{\em deck} of permutations.

\begin{defn}\label{defn:tree}
For a $k$-deck ${\cal D} =
\{(\sigma_i,\tau_i): 1\leq i\leq k\}$, let $\tr{j}$ be the induced subgraph of
$G(d,k; m_0,\ldots,m_k)$ spanned by the following paths (see
Figure~\ref{fig:tree} for a picture):

\begin{enumerate}[(i)]
\item {\em orbit paths} $v_{(d+1)i}(j)\mbox{-}  v_{(d+1)i}(j+m_i) \mbox{-}
  \cdots \mbox{-} v_{(d+1)i}(j+(d+1)m_i)$, $0\leq i\leq k$;

\item a {\em radial path} $P_j = v_{0} (j) \mbox{-} v_{1}(j) \mbox{-} \cdots
  \mbox{-}  v_{(d+1)k} (j)$;

\item {\em outward paths} of length $\tau_{i+1} (\ell-1)$ at vertex
      $v_{(d+1)i}(j+\ell \, m_i)$ for  $0\leq i\leq k-1$ and $1\leq
      \ell\leq d+1$;

\item {\em inward paths} of length $\sigma_{i}(\ell-1)$ at
      vertex $v_{(d+1)i} (j+\ell \, m_i)$ for $1\leq i\leq k$ and
      $1\leq \ell\leq d+1$.
  \end{enumerate}
\end{defn}

Observe that any subgraph $\tr{j}$ can be obtained by applying the $j$-th
power of the automorphism $\varphi$ to the subgraph $\tr{0}$, that is,
$\tr{j} = \varphi^j(\tr{0})$.

\smallskip

Given a family $\tr{j}$, $0 \leq j < n$, of induced subgraphs of
$G(d,k;m_0,\ldots,m_k)$, we are interested in how its members intersect.
For this reason we define what we call the {\em span} of a
$k$-deck of permutations ${\cal D}$, denoted by $\operatorname{sp}({\cal D})$,
as:
\begin{equation}\label{eq:span}
  \operatorname{sp}({\cal D}) := \displaystyle \bigcup_{i=1}^k
  \operatorname{sp}(\sigma_i,\tau_i) \cup \{\pm \ell \cdot m_j \in
  \mb{Z}_n: 0\leq \ell
  \leq (d+1), \,\, 0\leq j \leq k\},
\end{equation}
where $\operatorname{sp}(\sigma_i,\tau_i)$ is defined to be the subset of
$\mb{Z}_n$ given by
$$\operatorname{sp}(\sigma_i, \tau_i) := \{\pm ((q+1)m_{i-1}-(p+1)m_i) :
0\leq p,q\leq d, \,\, \sigma_i(p)+\tau_i(q)\geq d+1 \}.$$

For a motivation of the definition of $\operatorname{sp}({\cal D}) \subseteq 
\mathbb{Z}_n$ see Lemma~\ref{lem:span} where we prove that for any
$j \in \operatorname{sp}({\cal D})$, the subgraphs $\tr{j}$ and $\tr{0}$ intersect.

\begin{figure}[ht!]
\centering
\begin{tikzpicture}[xscale=1.2]

\foreach \i in {0,1,2,3,4,5,6,7,8} {
\coordinate (VV_\i) at (0,0.75*\i);
\draw (VV_\i) node {\tiny $\bullet$};
}

\foreach \i in {1,2,3,4} {
\coordinate (VH1_\i) at (1.5*\i,6);
\draw (VH1_\i) node {\tiny $\bullet$};
}

\foreach \i/\j in {1/20,2/40,3/11,4/31} {
\coordinate (VH2_\i) at (1.5*1.2*\i,3);
\draw (VH2_\i) node {\tiny $\bullet$};
}

\foreach \i/\j in {1/41,2/33,3/25,4/17} {
\coordinate (VH3_\i) at (1.5*\i,0);
\draw (VH3_\i) node {\tiny $\bullet$};
}

\coordinate (V11) at (1*1.5,7*0.75);
\coordinate (V13) at (3*1.5,7*0.75);
\coordinate (V23) at (3*1.5,6*0.75);
\coordinate (V14) at (4*1.5,7*0.75);
\coordinate (V24) at (4*1.5,6*0.75);
\coordinate (V34) at (4*1.5,5*0.75);
\coordinate (V240) at (2.4*1.5, 6*0.75);
\coordinate (V340) at (2.4*1.5, 5*0.75);
\coordinate (V311) at (3.6*1.5, 5*0.75);
\coordinate (V131) at (4.8*1.5, 7*0.75);
\coordinate (V231) at (4.8*1.5, 6*0.75);
\coordinate (V331) at (4.8*1.5, 5*0.75);
\coordinate (V520) at (1.2*1.5, 3*0.75);
\coordinate (V540) at (2.4*1.5, 3*0.75);
\coordinate (V640) at (2.4*1.5, 2*0.75);
\coordinate (V531) at (4.8*1.5, 3*0.75);
\coordinate (V631) at (4.8*1.5, 2*0.75);
\coordinate (V731) at (4.8*1.5, 1*0.75);
\coordinate (V741) at (1*1.5, 1*0.75);
\coordinate (V725) at (3*1.5, 1*0.75);
\coordinate (V625) at (3*1.5, 2*0.75);
\coordinate (V525) at (3*1.5, 3*0.75);
\coordinate (V717) at (4*1.5, 1*0.75);
\coordinate (V617) at (4*1.5, 2*0.75);

\draw (VV_0) -- (VV_8)
    (VV_8) -- (VH1_4)
    (VV_4) -- (VH2_4)
    (VV_0) -- (VH3_4);

\draw (VH1_1) -- (V11)
    (VH1_3) -- (V23)
    (VH1_4) -- (V34)
    (VH2_2) -- (V240)
    (VH2_3) -- (V311)
    (VH2_4) -- (V131);

\draw (VH2_1) -- (V520)
    (VH2_2) -- (V640)
    (VH2_4) -- (V731)
    (VH3_1) -- (V741)
    (VH3_3) -- (V525)
    (VH3_4) -- (V617);

\foreach \i in {V11,V13,V23,V14,V24,V34} {
    \draw (\i) node {\tiny $\bullet$};
}

\foreach \i in {V240,V340,V311,V131,V231,V331} {
    \draw (\i) node {\tiny $\bullet$};
}

\foreach \i in {V520,V540,V640,V531,V631,V731} {
    \draw (\i) node {\tiny $\bullet$};
}

\foreach \i in {V741,V725,V625,V525,V717,V617} {
    \draw (\i) node {\tiny $\bullet$};
}

\foreach \i in {0,1,2,3,4,5,6,7,8} {
    \draw (VV_\i) node[left] {\tiny $v_{\i} (0)$};
}

\foreach \i in {1,2,3,4} {
    \draw (VH1_\i) node[above] {\tiny $v_{8} (\i)$};
}

\foreach \i/\j in {1/20,2/40,3/11,4/31} {
    \draw (VH2_\i) node[below left] {\tiny $v_{4} (\j)$};
}

\foreach \i/\j in {1/41,2/33,3/25,4/17} {
    \draw (VH3_\i) node[below] {\tiny $v_{0} (\j)$};
}
\end{tikzpicture}
\caption{\small Illustration of the tree ${\cal D}T_0$ in $G(3,2;41,20,1)$.
The corresponding $2$-deck of permutations of $\{0,1,2,3\}$ is
${\cal D}=\{((1,2,0,3), (1,0,3,2)), ((1,0,2,3), (0,2,1,3))\}$.
}
\label{fig:tree}
\end{figure}

\begin{lemma}
  \label{lem:tree}
  If $\operatorname{sp}({\cal D})=\mathbb{Z}_n$, then the subgraph ${\cal D}T_j$
  is a tree for all $j=0,\ldots,n-1$.
\end{lemma}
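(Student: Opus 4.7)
Since $\varphi$ is an automorphism of $G(d,k;m_0,\ldots,m_k)$ and ${\cal D}T_j = \varphi^j({\cal D}T_0)$ by the observation following Definition~\ref{defn:tree}, it suffices to prove the claim for $j=0$. I would first verify connectedness, which is automatic from the construction: the radial path $P_0$ passes through every anchor vertex $v_{(d+1)i}(0) = v_{(d+1)i}(0\cdot m_i)$, joining all $k{+}1$ orbit paths of ${\cal D}T_0$ into a connected subgraph, and every outward or inward path is glued to an orbit path at its anchor. Hence the four ingredients of Definition~\ref{defn:tree} sit in a single connected piece irrespective of the deck ${\cal D}$.

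\smallskip

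The main step is a counting/collision argument. Using that each $\sigma_i,\tau_i$ is a permutation of $\{0,\ldots,d\}$, the ``naive'' vertex and edge counts for ${\cal D}T_0$ (obtained by summing over all paths in Definition~\ref{defn:tree} and assuming every listed vertex is distinct) work out to
\[
V_{\text{naive}} = (d+2)\bigl((d+1)k+1\bigr) = n-d-1, \qquad E_{\text{naive}} = (d+1)\bigl((d+2)k+1\bigr) = V_{\text{naive}}-1.
\]
So, provided no two paths of Definition~\ref{defn:tree} share an unplanned vertex, ${\cal D}T_0$ is a connected graph with $V = E+1$, i.e.\ a tree. A case check now rules out all but one family of potential collisions: two orbit paths live on distinct circles; the radial path lies in column $0$ while every outward/inward path lies in a column $\ell m_i\not\equiv 0\pmod n$; and an outward path from circle $(d+1)i$ reaches at most circle $(d+1)i+d<(d+1)(i+1)$, with the analogous statement for inward paths, so outward/inward paths never reach a neighbouring orbit circle. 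The only remaining possibility is that an outward path from $v_{(d+1)(i-1)}\bigl((q+1)m_{i-1}\bigr)$ of length $\tau_i(q)$ collides with an inward path from $v_{(d+1)i}\bigl((p+1)m_i\bigr)$ of length $\sigma_i(p)$ inside the slab between those two orbit circles. A direct check shows that this happens if and only if $(q+1)m_{i-1}\equiv(p+1)m_i\pmod n$ and $\sigma_i(p)+\tau_i(q)\geq d+1$, which by definition is the condition ``$0\in\operatorname{sp}(\sigma_i,\tau_i)$''.

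\smallskip

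It remains to deduce from the hypothesis $\operatorname{sp}({\cal D})=\mathbb{Z}_n$ that $0\notin\operatorname{sp}(\sigma_i,\tau_i)$ for all $1\le i\le k$; this is the one place where the span hypothesis is really used, and it is a clean counting inequality. The number of pairs $(p,q)\in\{0,\ldots,d\}^2$ with $\sigma_i(p)+\tau_i(q)\geq d+1$ equals $\binom{d+1}{2}=\tfrac{d(d+1)}{2}$ (since $\sigma_i,\tau_i$ are permutations), so $|\operatorname{sp}(\sigma_i,\tau_i)|\le d(d+1)$, and (using that $n$ is odd, so $x\equiv -x$ only when $x=0$) one has $|\{\pm\ell\, m_j : 0\le\ell\le d+1,\,0\le j\le k\}| \le 2(d+1)(k+1)+1$. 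Summing,
\[
|\operatorname{sp}({\cal D})| \;\le\; k\cdot d(d+1) \;+\; 2(d+1)(k+1) + 1 \;=\; (d+1)\bigl((d+2)k+2\bigr)+1 \;=\; n,
\]
with equality only when every listed element is distinct in $\mathbb{Z}_n$. The hypothesis forces this equality, so $0=0\cdot m_0$ from the second union cannot reappear in any $\operatorname{sp}(\sigma_i,\tau_i)$. This rules out the last possible collision, and the first two paragraphs conclude that ${\cal D}T_0$ is a tree. The main obstacle is the bookkeeping in the case analysis of paragraph two, where one must systematically exclude every other coincidence among the four types of paths; the span inequality of paragraph three is tight by design, which is precisely why $\operatorname{sp}({\cal D})=\mathbb{Z}_n$ is the ``right'' hypothesis here.
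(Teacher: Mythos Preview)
Your approach mirrors the paper's: both derive the crucial disjointness of the sets in the union defining $\operatorname{sp}({\cal D})$ by the same tight counting inequality, and both then use this to rule out unwanted coincidences among the constituent paths. The paper organises the argument as an incremental build (orbit arcs $+$ radial path first give a tree; then outward and inward paths are attached and one checks no cycle is created), whereas you compute $V_{\text{naive}}$ and $E_{\text{naive}}$ up front. The substance is the same.

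There is, however, a real gap in your execution. Recall that ${\cal D}T_0$ is the \emph{induced} subgraph on the listed vertex set, so it may contain edges of $G$ beyond those lying on the listed paths. Your claim ``provided no two paths share an unplanned vertex, ${\cal D}T_0$ is a connected graph with $V=E+1$'' silently assumes $E=E_{\text{naive}}$, i.e.\ that no extra induced edge appears. But take an outward path at $v_{(d+1)(i-1)}\bigl((q{+}1)m_{i-1}\bigr)$ and an inward path at $v_{(d+1)i}\bigl((p{+}1)m_i\bigr)$ with $(q{+}1)m_{i-1}\equiv(p{+}1)m_i\pmod n$ and $\sigma_i(p)+\tau_i(q)=d$ exactly: these two paths share no vertex, yet their endpoints are adjacent along the radial path $P_{(p+1)m_i}$, so the induced subgraph acquires an extra radial edge and hence a cycle. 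Your condition ``$0\in\operatorname{sp}(\sigma_i,\tau_i)$'' detects only the overlap case $\sigma_i(p)+\tau_i(q)\ge d+1$, not this adjacency case.

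The fix is already in your hands. Your counting shows that equality $|\operatorname{sp}({\cal D})|=n$ forces \emph{all} listed elements to be distinct in $\mathbb{Z}_n$; in particular the sets $\{\pm\ell m_i:1\le\ell\le d+1\}$ are pairwise disjoint across $i$, so $(q{+}1)m_{i-1}\equiv(p{+}1)m_i$ is impossible for any $p,q$. Thus no outward and inward path in the same slab ever lie in the same column, which simultaneously excludes vertex collisions and extra induced edges. This stronger consequence of disjointness is precisely what the paper invokes; you should use it too rather than the weaker statement $0\notin\operatorname{sp}(\sigma_i,\tau_i)$.
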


\begin{proof}
  In a first step we show that $\operatorname{sp}({\cal D})=\mathbb{Z}_n$
  implies that all $k+1$ sets in Equation~(\ref{eq:span}) are pairwise disjoint. Note that
  $|\operatorname{sp}(\sigma_i,\tau_i)|\leq 2 \binom{d+1}{2} = d(d+1)$. So, the
  number of elements in $\operatorname{sp}({\cal D})$ is at most
  $$ d(d+1)k+ 2(d+1)(k+1) + 1=(d+1)((d+2)k+2) + 1=n.$$
  Thus all the sets in the union must be mutually disjoint
  (over $\mathbb{Z}_n$) to achieve $\operatorname{sp}({\cal D})=\mathbb{Z}_n$.

  \medskip
  Now we proceed to show that ${\cal
  D}T_j$ is a tree for all $j=0,\ldots,n-1$. Note that steps (i) and
  (ii) of the construction in Definition \ref{defn:tree} yield a tree
  consisting of the path $P_j$ and arcs of length $d+1$ of cycles $C_i$ for
  $0\leq i\leq k$. In steps (iii) and (iv) we attach outward and inward
  paths to the arcs attached in step (i). Note that the resulting graph is
  connected. For a cycle to occur, two paths added in steps (iii) and (iv)
  must be subpaths of the same radial path $P_{\ell}$ for some $\ell$. For two outward paths,
  or two inward paths to be subpaths of the same path $P_{\ell}$, we must have
$$
  j+(t+1)m_i=j+(t'+1)m_i
$$
  for some $0\leq t<t'\leq d$, $0\leq i\leq k$. As all values $m_i$ are
  invertible in $\mathbb{Z}_n$, this is not possible. For an outward path and a
  inward path to be subpaths of the same  path $P_{\ell}$, we must have
$$
j+(t+1)m_{i-1} = j+(t'+1)m_i
$$
  for some $0\leq t,t'\leq d$, $0\leq i\leq k$. But then the sets
  $\{\pm \, \ell \, m_i: 1\leq \ell \leq d+1\}$ and $\{\pm \, \ell \, m_{i-1}:
  1\leq \ell \leq (d+1)\}$ are not disjoint, a contradiction. Thus the resulting graph   is connected, without a cycle -- and hence a tree.
\end{proof}

\begin{lemma}
  \label{lem:span}
  For all $j\in \operatorname{sp}({\cal D})$, the trees $\tr{0}$ and
  $\tr{j}$ intersect.
\end{lemma}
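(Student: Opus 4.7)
The plan is to verify the lemma by a case analysis on the definition of $\operatorname{sp}({\cal D})$, exhibiting an explicit common vertex of $\tr{0}$ and $\tr{j}$ in each case. Recall that $\tr{j}=\varphi^j(\tr{0})$, so $\tr{j}$ is obtained from the description in Definition \ref{defn:tree} by replacing every second coordinate by itself shifted by $j$ (mod $n$); in particular the orbit path of $\tr{j}$ on $C_i$ consists of the vertices $v_{(d+1)i}(j+sm_i)$ for $s=0,\dots,d+1$, and the outward/inward paths of $\tr{j}$ emanate from $v_{(d+1)i}(j+(\ell+1)m_i)$ rather than from $v_{(d+1)i}((\ell+1)m_i)$.

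\medskip
\noindent\textbf{Case A:} $j=\pm\ell m_i$ with $0\le\ell\le d+1$ and $0\le i\le k$. For $j=\ell m_i$, the orbit path of $\tr{0}$ on $C_i$ contains $v_{(d+1)i}(\ell m_i)$ (take $s=\ell$), and the orbit path of $\tr{j}$ on $C_i$ contains the same vertex $v_{(d+1)i}(\ell m_i)=v_{(d+1)i}(j+0\cdot m_i)$ (take $s=0$). The sign $j=-\ell m_i$ is handled by swapping the two choices of $s$.

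\medskip
\noindent\textbf{Case B:} $j\in\operatorname{sp}(\sigma_i,\tau_i)$ for some $1\le i\le k$, so $j=\pm\bigl((q+1)m_{i-1}-(p+1)m_i\bigr)$ with $0\le p,q\le d$ and $\sigma_i(p)+\tau_i(q)\ge d+1$. Consider first the subcase $j=(q+1)m_{i-1}-(p+1)m_i$. The outward path of length $\tau_i(q)$ in $\tr{0}$ emanating from $v_{(d+1)(i-1)}\bigl((q+1)m_{i-1}\bigr)$ consists of the vertices
\begin{equation*}
v_{(d+1)(i-1)+s}\bigl((q+1)m_{i-1}\bigr),\qquad 0\le s\le\tau_i(q),
\end{equation*}
and lies on the radial path $P_{(q+1)m_{i-1}}$. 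The inward path of length $\sigma_i(p)$ in $\tr{j}$ emanating from $v_{(d+1)i}\bigl(j+(p+1)m_i\bigr)=v_{(d+1)i}\bigl((q+1)m_{i-1}\bigr)$ also lies on $P_{(q+1)m_{i-1}}$ and consists of the vertices $v_{(d+1)i-s}\bigl((q+1)m_{i-1}\bigr)$ for $0\le s\le\sigma_i(p)$. These two subpaths of $P_{(q+1)m_{i-1}}$ cover the levels $[(d+1)(i-1),\,(d+1)(i-1)+\tau_i(q)]$ and $[(d+1)i-\sigma_i(p),\,(d+1)i]$, which overlap precisely when $\sigma_i(p)+\tau_i(q)\ge d+1$, as assumed. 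Any level in the overlap supplies a common vertex.

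\medskip
For the other sign $j=(p+1)m_i-(q+1)m_{i-1}$, the roles of the two trees are swapped: use the inward path of $\tr{0}$ at $v_{(d+1)i}((p+1)m_i)$ and the outward path of $\tr{j}$ at $v_{(d+1)(i-1)}(j+(q+1)m_{i-1})=v_{(d+1)(i-1)}((p+1)m_i)$, both lying on $P_{(p+1)m_i}$, and conclude by the same level-overlap calculation. This exhausts all elements of $\operatorname{sp}({\cal D})$.

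\medskip
The routine part is Case A; the main bookkeeping obstacle is Case B, where one has to translate the algebraic condition $\sigma_i(p)+\tau_i(q)\ge d+1$ into a nonempty intersection of two level ranges on the same radial path. Keeping straight which tree contributes the inward path and which the outward path (and how the $\varphi^j$-shift relocates their anchor vertices so that their second coordinates coincide) is the only delicate point.
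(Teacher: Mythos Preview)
Your proof is correct and follows essentially the same approach as the paper's own proof: a two-case analysis on the definition of $\operatorname{sp}({\cal D})$, intersecting orbit paths in Case~A and matching an outward path of one tree against an inward path of the other on a common radial path in Case~B. Your version is in fact slightly more explicit than the paper's, spelling out the level ranges $[(d+1)(i-1),\,(d+1)(i-1)+\tau_i(q)]$ and $[(d+1)i-\sigma_i(p),\,(d+1)i]$ whose overlap is equivalent to $\sigma_i(p)+\tau_i(q)\ge d+1$, whereas the paper simply asserts that the paths intersect under this hypothesis.
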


\begin{proof}
  If $j= \pm \, \ell \, m_i$ for some $0\leq \ell \leq d+1$,
  $1 \leq i \leq k$, then either vertex $v_{(d+1)i} (\ell\,m_i)$ or vertex
  $v_{(d+1)i} (0)$ must be common to both $\tr{0}$ and $\tr{j}$. Hence they
  intersect.

  Let $j = (q+1) m_{i-1} - (p+1) m_i$, $0\leq p,q\leq d$,
  $\sigma_i(p)+\tau_i(q)\geq d+1$. Then $\tr{j}$
  contains an inward path of length $\sigma_i(p)$ at $v_{(d+1)i} (j+(p+1)m_i)$,
  and $\tr{0}$ contains an outward path of
  length $\tau_i(q)$ at $v_{(d+1)(i-1)} ( (q+1)m_{i-1} )$.
  Since $j = (q+1) m_{i-1} - (p+1) m_i$ we have
  $$ (q+1)m_{i-1} = j+(p+1)m_i, $$
  and since $\sigma_i(p)+\tau_i(q)\geq d+1$, the inward and outward paths
  intersect. Similarly, it can be shown that $\tr{0}$ and $\tr{j}$ intersect
  whenever $j=-(q+1)m_{i-1}+(p+1)m_i$, $\sigma_i(p)+\tau_i(q)\geq d+1$.
\end{proof}

\subsection{The construction principle} \label{ssec:proof}

\begin{theorem}   \label{thm:perselect}
  Let $G = G(d,k;m_0,\ldots,m_k)$ and let 
  ${\cal D}=\{(\sigma_i,\tau_i): 1\leq i\leq k\}$ be a $k$-deck of
  permutations such that:
  \begin{enumerate}[(i)]
    \item $\operatorname{sp}({\cal D}) = \mb{Z}_n$;
    \item $\tau_{i}(t)+\sigma_{i-1}(t)\geq 1$ for $2\leq i\leq k,
      0\leq t\leq d-1$;
    \item $\sigma_{i}(d),\tau_{i}(d) \geq 1$ for $1\leq i\leq k$.
  \end{enumerate}
  Then ${\cal K}(G,{\cal D}T)$ is a neighbourly member of $\overline{\cal
  K}(d+1)$ where ${\cal D}T = \{\tr{j}: 0\leq j\leq n-1\}$.
\end{theorem}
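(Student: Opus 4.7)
The plan is to verify the hypotheses of Theorem~\ref{thm:construction} applied to $G = G(d,k;m_0,\ldots,m_k)$ and the family ${\cal D}T = \{\tr{j} : 0 \leq j \leq n-1\}$. These require each $\tr{j}$ to be an induced subtree of $G$ on $n-d-1$ vertices, any two trees in the family to intersect, each vertex of $G$ to belong to exactly $d+2$ trees, and a pair of distinct vertices of $G$ to be jointly contained in $d+1$ members of the family if and only if they span an edge of $G$.

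For the first three hypotheses, assumption (i) alone suffices. The tree property is Lemma~\ref{lem:tree}; the induced subgraph property is built into Definition~\ref{defn:tree}; and the vertex count follows from direct enumeration of the spine $P_j$, the $k+1$ orbit arcs, and the $2k(d+1)$ outward and inward paths, where the disjointness of the sets $\{\pm\ell m_j : 0 \leq \ell \leq d+1\}$, $0 \leq j \leq k$, forced by condition (i) ensures these pieces share only their designated endpoints. The counts $(d+1)k+1$, $(d+1)(k+1)$, and $2k\binom{d+1}{2}$ sum to $(d+2)((d+1)k+1) = n-d-1$. Mutual intersection is immediate from $\tr{j} = \varphi^j(\tr{0})$, Lemma~\ref{lem:span}, and $\operatorname{sp}({\cal D})=\mb{Z}_n$. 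The vertex-multiplicity condition reduces, via the $\varphi$-symmetry, to counting vertices of $\tr{0}$ at each level of $G$: on an orbit level, the arc and $P_0$ contribute $d+2$, while on an intermediate level $(d+1)i+t$ with $1\leq t\leq d$, the permutation property of $\sigma_{i+1}$ and $\tau_{i+1}$ yields $1+(d+1-t)+t=d+2$.

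The fourth hypothesis is the crux. The ``if'' direction is a direct enumeration in three sub-cases: an orbit edge at level $(d+1)i$ has its $d+1$ common trees in bijection with adjacent pairs on a $(d+2)$-vertex arc; a radial edge $v_s$--$v_{s+1}$ lying inside a slab receives $1+(d-t)+t=d+1$ from spine, outward, and inward paths; and a radial edge crossing an orbit level receives $1+0+d=d+1$ by the same split. For the ``only if'' direction, I propose a case split over non-adjacent pair types: same orbit level but non-adjacent on the cycle, same non-orbit level, consecutive orbit levels, consecutive levels with different radials, and non-consecutive levels (both within a slab and crossing orbit levels). In most of these cases the count is strictly less than $d+1$ by pigeonhole on $\sigma_i,\tau_i$ combined with the disjointness from condition (i). The delicate situations in which $d+1$ coincidences would otherwise be forced are precisely those ruled out by conditions (ii) and (iii). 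Concretely, condition (iii) ($\sigma_i(d), \tau_i(d)\geq 1$) keeps the unique permutation-zero away from the arc's far end, which would otherwise inflate the count to $d+1$ for pairs at consecutive levels straddling an orbit level with radial difference $\pm m_i$; and condition (ii) ($\tau_i(t)+\sigma_{i-1}(t)\geq 1$ for $0\leq t\leq d-1$ at middle orbit levels), together with condition (iii) for $t=d$, forces the unique zero positions of $\tau_i$ and $\sigma_{i-1}$ to sit at distinct indices, capping the count for pairs at levels $(d+1)(i-1)\pm 1$ that would otherwise share too many common trees through the shared $m_{i-1}$-arc.

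The main obstacle is the ``only if'' case analysis. While most sub-cases reduce to one-line pigeonhole arguments, identifying precisely where conditions (ii) and (iii) are needed takes some care. Once all four of Theorem~\ref{thm:construction}'s hypotheses are confirmed, its conclusion immediately delivers that ${\cal K}(G,{\cal D}T)$ is a neighbourly member of $\overline{{\cal K}}(d+1)$ with dual graph isomorphic to $G$, as claimed.
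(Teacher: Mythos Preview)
Your strategy differs from the paper's in one key respect. Rather than verifying the hypotheses of Theorem~\ref{thm:construction} directly, the paper first proves an auxiliary result (Lemma~\ref{lem:construction2}) that replaces the ``only if'' half of condition~(ii) --- the assertion that $|\hat u\cap\hat v|=d+1$ forces $\langle u,v\rangle$ to be an edge of $G$ --- by a purely local \emph{degree condition}: for every vertex $u$ of every tree $T\in{\cal T}$ one requires $\deg_G(u)-\deg_T(u)\le 1$. The proof of Lemma~\ref{lem:construction2} uses an edge-labelling argument on rooted orientations of the trees to show that this degree bound, together with the ``if'' direction and the vertex-multiplicity count, already forces the ``only if'' direction of Theorem~\ref{thm:construction}.

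This buys a great deal. Verifying the degree condition is a short vertex-by-vertex check on $\tr{0}$ (by $\varphi$-symmetry), and hypotheses~(ii) and~(iii) of the theorem enter transparently: they are exactly what is needed so that the vertices on the orbit arcs --- the only vertices of $G$-degree $3$ or $4$ --- retain enough neighbours inside $\tr{0}$. Condition~(iii) handles the far endpoint $v_{(d+1)i}((d+1)m_i)$ of each arc, and condition~(ii) handles the interior arc vertices on the middle cycles $C_i$, $1\le i\le k-1$. By contrast, your direct case analysis over all non-adjacent pair-types, while plausible in outline, would need substantially more bookkeeping to be airtight; your description of precisely where~(ii) and~(iii) bite is heuristic rather than demonstrated, and the list of pair-types would need to be shown exhaustive. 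Your route can likely be made to work, but the degree-condition reformulation sidesteps the entire pairwise case split.
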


To prove Theorem~\ref{thm:perselect}, we first present an equivalent of
Theorem~\ref{thm:construction}.

\begin{lemma}  \label{lem:construction2}
  Let $G$ be a graph and ${\cal T}=\{T_i\}_{i=1}^n$ be a family of
  $(n-d-1)$-vertex induced subtrees of $G$, any two of which intersect.
  Suppose that
  \begin{enumerate}[(i)]
    \item each vertex of $G$ is in exactly $d+2$ members of ${\cal T}$,
    \item any two adjacent vertices $u$ and $v$ occur together in exactly
      $d+1$ members of ${\cal T}$, and
    \item for a vertex $u\in V(T), T\in {\cal T}$, we have
      $\deg_G(u)-\deg_T(u)\leq 1$.
  \end{enumerate}
  Then ${\cal K}(G, {\cal T})$ is a neighbourly member of
  $\overline{\cal K}(d+1)$ with $\Gamma({\cal K}(G, {\cal T}))\cong G$.
\end{lemma}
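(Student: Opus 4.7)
My plan is to reduce Lemma~\ref{lem:construction2} to Theorem~\ref{thm:construction} by verifying the latter's condition~(ii), which asks that two distinct vertices be jointly contained in $d+1$ trees \emph{if and only if} they are adjacent in $G$. The forward direction is precisely hypothesis~(ii) of our Lemma, so the task is to deduce the converse from (i)--(iii); once that is done, Theorem~\ref{thm:construction} immediately delivers the conclusion.

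The key intermediate step is an injectivity statement derived from (iii). For each $u \in V(G)$ set $\mathcal{T}_u := \{T \in \mathcal{T} : u \in T\}$, which has cardinality $d+2$ by~(i). By (ii), each $w \in N_G(u)$ lies in $d+1$ of these trees, so there is a well-defined map $\phi_u \colon N_G(u) \to \mathcal{T}_u$ sending $w$ to the unique tree in $\mathcal{T}_u$ that avoids $w$. Since each $T \in \mathcal{T}$ is an induced subgraph of $G$,
\[
\deg_T(u) \,=\, |N_G(u) \cap V(T)| \,=\, \deg_G(u) - |\phi_u^{-1}(T)|,
\]
and (iii) forces $|\phi_u^{-1}(T)| \leq 1$ for every $T \in \mathcal{T}_u$, so $\phi_u$ is injective. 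In particular, $\deg_G(u) \leq d+2$, and the ``missed'' trees are pairwise distinct.

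To prove the converse of Theorem~\ref{thm:construction}(ii), I would argue by contradiction. Suppose $u \neq v$ share $d+1$ common trees but $u \not\sim_G v$; denote by $T_\star \in \mathcal{T}_u \setminus \mathcal{T}_v$ and $T_\dagger \in \mathcal{T}_v \setminus \mathcal{T}_u$ the unique ``private'' trees of $u$ and $v$. Picking any $T \in \mathcal{T}_u \cap \mathcal{T}_v$, the unique $u$-$v$ path in $T$ has length at least $2$ and yields a vertex $w_1 \in N_G(u) \cap V(T) \setminus \{v\}$. The plan is to use the injectivity of $\phi_u$ (and symmetrically $\phi_v$) to track how the pair $(w_1, v)$ inherits shared trees from $(u,v)$, then iterate along the path (or switch to a second shared tree $T' \in \mathcal{T}_u \cap \mathcal{T}_v$) and propagate the constraints until two distinct $G$-neighbours of $u$ are forced to map under $\phi_u$ to the same tree, contradicting its injectivity.

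The main obstacle I anticipate is precisely this last combinatorial step: the injectivity of $\phi_u$ for a single vertex follows cleanly from (iii), but weaving together the simultaneous constraints coming from all $d+1$ shared trees (together with the two exceptional trees $T_\star,T_\dagger$, the degree count at $v$, and the fact that each tree is induced) to rule out a non-adjacent pair with $d+1$ common trees requires careful bookkeeping of how the unique $u$-$v$ paths in different shared trees interact. Once this contradiction is secured, the conclusion that $\mathcal{K}(G,\mathcal{T})$ is a neighbourly member of $\overline{\mathcal{K}}(d+1)$ with $\Gamma(\mathcal{K}(G,\mathcal{T})) \cong G$ is immediate from Theorem~\ref{thm:construction}.
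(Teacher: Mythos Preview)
Your overall strategy---reduce to Theorem~\ref{thm:construction} by proving that $|\hat u\cap\hat v|=d+1$ forces $u$ and $v$ to be adjacent---is exactly what the paper does, and your reformulation of hypothesis~(iii) as injectivity of the maps $\phi_w\colon N_G(w)\to\mathcal T_w$ is correct and is precisely the form in which the paper ultimately applies~(iii). What is missing, and what you yourself flag as the ``main obstacle'', is the mechanism that propagates the $d+1$ common trees along the $u$--$v$ path; the bookkeeping you sketch does not supply it.

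The paper's key device is an edge-labelling of oriented trees. Fix $T_i\in\mathcal T$ and a root $r\in V(T_i)$, direct every edge towards $r$, and label $\overrightarrow{\langle a,b\rangle}$ by the unique element of $\hat a\setminus\hat b$. A global pigeonhole count---$T_i$ has $n-d-2$ edges, and there are exactly $n-d-2$ indices $j$ with $r\notin T_j$, each of which must occur as a label because $T_j$ meets $T_i$---shows that all edge labels are distinct. This immediately yields the propagation claim: any internal vertex $w$ on the $u$--$v$ path in $T_i$ satisfies $\hat u\cap\hat v\subseteq\hat w$, for otherwise $T_i(w)$ would carry a repeated label on the two branches towards $u$ and towards $v$. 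Only now does your injectivity enter, and at the \emph{second} vertex $w$ of the path rather than at $u$: writing $u,w,z$ for the first three vertices and $j$ for the unique element of $\hat w\setminus\hat z$, if $j\notin\hat u$ then $\phi_w(u)=\phi_w(z)=T_j$ with $u\neq z$, contradicting injectivity of $\phi_w$. Hence $j\in\hat u$, and together with the private index of $\hat u$ this gives two distinct elements of $\hat u\setminus\hat v$, contradicting $|\hat u\cap\hat v|=d+1$.

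Without the distinct-labels lemma you have no control over $\hat{w_1}$ beyond $|\hat u\cap\hat{w_1}|=d+1$; a priori $\hat{w_1}$ may drop any one of the $d+1$ common indices, so ``tracking how $(w_1,v)$ inherits shared trees'' does not advance. The collision you seek in $\phi_u$ at the endpoint is also misdirected; the contradiction in the paper lives at an interior vertex of the path.
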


\begin{proof}
  Let $T_i\in {\cal T}$ be a tree. For a vertex $r\in T_i$, define the oriented
  tree $T_i(r)$ with directed edges $\ovr{\langle u,v \rangle}$ where
  $\langle u,v \rangle\in T_i$ and $v$ is closer to $r$ than $u$. Define the
  {\em label} $l(\ovr{\langle u,v \rangle})$ to be the unique
  element of $\hat{u}\backslash \hat{v}$ (it follows from conditions (i) and
  (ii) that $\hat{u}\backslash \hat{v}$ must have exactly one element). We
  prove that all edges in $T_i(r)$ have distinct labels.

  There are $d+1$ other trees that intersect $T_i$ in $r$. Let $T_j\in {\cal T}$
  be a tree that does {\em not} intersect $T_i$ in $r$. Since any two trees in
  ${\cal T}$ intersect, there is a vertex $w\neq r$ which is common to $T_i$
  and $T_j$. Since $w, r \in T_i$ there is a unique path from $w$ to $r$ in
  $T_i$. Furthermore, since $r \not \in T_j$ but $w \in T_j$ one of the edges
  in this path in $T_i(r)$ from $w$ to $r$ must have the label $j$.
  Since there are $n-d-2$ such trees and also $n-d-2$ edges in $T_i(r)$, we
  conclude that all labels must be distinct. Furthermore the labels are
  different from the ones seen at $r$. This property of oriented trees was
first proved in \cite{si}, which we have slightly adapted to suit the
current setting.

  \smallskip

  We now prove that $(G,{\cal T})$ satisfies the conditions in Theorem
  \ref{thm:construction}. Essentially, we need to show that $|\hat{u}\cap
  \hat{v}|=d+1$ implies that $\langle u,v \rangle$ is an edge in $G$.
  Suppose $u$ and $v$ are vertices in $G$
  such that $|\hat{u}\cap \hat{v}|={d+1}$, i.e., $\hat{u} = \{ a_1, \ldots ,
  a_{d+1}, r \}$ and $\hat{u} = \{ a_1, \ldots , a_{d+1}, s \}$ for some
  $r,s \in \mathbb{Z}_n$. Furthermore, assume that $\langle u,v \rangle$ is
  not an edge of $G$. Let $T_i$ be one of the trees containing both $u$ and
  $v$, i.e., $i \in \{a_1, \ldots , a_{d+1} \}$, and let $w$ be an internal
  vertex of the path from $u$ to $v$ in $T_i$.

  \medskip

\noindent {\em Claim: } $\hat{u}\cap \hat{v}\subseteq \hat{u}\cap \hat{w}$.

  If possible, let $j\in (\hat{u}\cap \hat{v})\backslash (\hat{u}\cap
  \hat{w})$. Then $j\in \hat{u},\hat{v}$ but $j\not\in \hat{w}$. Hence, in
  the oriented tree $T_i(w)$, there exist edges on the paths from $u$ to $w$
  and $v$ to $w$ with label $j$. But this contradicts the
  uniqueness of labels on the edges of $T_i(w)$. This proves the claim.

  \medskip

  Let $u,w,z$ be the first three vertices on the path from $u$ to $v$ in
  $T_i$ (note that, possibly, $z = v$). Hence, by the above claim
  we have $\hat{w} = \{ a_1, \ldots ,
  a_{d+1}, j \}$ and $\hat{z}= \{ a_1, \ldots , a_{d+1}, k \}$ for some
  $j,k \in \mathbb{Z}_n$. Then $l (\ovr{\langle u,w \rangle}) = r$ and
  $l (\ovr{\langle w,z \rangle}) = j$.
  We show that $\{r,j\}\in \hat{u}\backslash \hat{v}$, a contradiction to
  $|\hat{u}\cap \hat{v}|=d+1$.

  By the above claim it suffices to show that
  $\{r,j\}\in \hat{u}\backslash \hat{z}$.
  First note that we have $j \in \hat{w}$ but $j \not\in \hat{z}$, and
  $r \in \hat{u}$, but $r \not\in \hat{w}$ by definition. Hence,
  $r \not \in \hat{u} \cap \hat{w}$ and by the above claim $r\not\in \hat{z}$.
  Now suppose that $j\not\in \hat{u}$. Thus $j\in \hat{w}$ but $j \not\in
  \hat{u},\hat{z}$. It follows that $\deg_G(w)\geq \deg_{T_k}(w)+2$,
  a contradiction to assumption (iii). Thus $\{r,j\}\subseteq
  \hat{u}\backslash \hat{w}$ and $|\hat{u}\cap \hat{v}|\leq d$, a
  contradiction to the assumption that $|\hat{u}\cap \hat{v}|=d+1$. This proves
  the lemma.
\end{proof}

We are now in a position to prove Theorem~\ref{thm:perselect}.

\begin{proof}[Proof of Theorem~\ref{thm:perselect}]
  Throughout the proof we make use of the notations
  $G = G(d,k; m_0$, $\ldots,m_k)$ and ${\cal D}T = \{\tr{j}: 0\leq j\leq n-1\}$.
  We show that $(G,{\cal D}T)$ satisfies the conditions in Lemma
  \ref{lem:construction2}.

  \medskip

  \noindent{\em Each tree has $n-d-1$ vertices}:
  From Definition \ref{defn:tree}, the number of vertices in a tree is:

  \begin{equation*}
    \begin{array}{llll}
       & (d+1)(k+1) &+ & {\rm (i)} \\
       & ((d+1)k+1) &+ & {\rm (ii)} \\
       & k(0+1+\cdots+d) &+& {\rm (iii)} \\
       & k(0+1+\cdots+d) && {\rm (iv)} \\
      =& (d+2)(d+1)k + (d+2) && \\
      =& n-d-1. &&
    \end{array}
  \end{equation*}

  \smallskip

  \noindent{\em Any two trees intersect}: From Lemma \ref{lem:span}, it
  follows that any two trees in ${\cal D}T$ intersect.

  \medskip

  \noindent{\em Each vertex appears in $(d+2)$ trees, each edge in
  $(d+1)$-trees}:
  We calculate the number of trees that cover a particular
  vertex $v\in V(G)$. Since $\tr{j}=\varphi^j(\tr{0})$, we see that $|\{j: v\in
  V(\tr{j})\}|=|\{j: \varphi^j(v)\in V(\tr{0})\}|$, which is the same as the number of
  vertices in $\tr{0}$ from the $\varphi$-orbit of $v$. Without loss of generality,
  assume $v=v_{i} (0)$ for some $0\leq i\leq (d+1)k$. When
  $v=v_{(d+1)i} (0)$ for some $0\leq i\leq k$, $T_0$ contains $d+2$ vertices
  from the $\varphi$-orbit
  of $v$. Now let $v=v_{(d+1)i+\ell} (0)$ where $1\leq \ell\leq d$. Let $\varphi_v$
  denote the $\varphi$-orbit of $v$. Note that $\tr{0}$
  intersects $\varphi_v$ at $v$. Other intersections between $\tr{0}$ and
  $\varphi_v$ occur along the inward and outward paths in $\tr{0}$ from
  vertices in $C_{i+1}$ and $C_{i}$ respectively. Now outward paths of
  length $\tau_{i+1}(q)$ at a vertex in $C_i$ intersect $\varphi_v$ if
  $\tau_{i+1}(q)\geq \ell$. Similarly inward paths
  of length $\sigma_{i+1}(p)$ at a vertex in $C_{i}$ intersect
  $\varphi_v$ if $\sigma_{i+1}(p)\geq d+1-\ell$. Recalling that
  $\sigma_{i+1},\tau_{i+1}$ are permutations of $\{0,1,\ldots,d\}$, and that the
  respective paths are distinct, we get that $\tr{0}$ contains
  $1+(d+1)-((d+1)-\ell)+(d+1)-\ell=d+2$ vertices from $\varphi_v$. Similarly, it can
  be shown that $\tr{0}$ contains $d+1$ edges from each edge orbit under
  $\varphi$. As in the vertex case, this
  implies that each edge is covered by exactly $d+1$ trees in ${\cal D}T$.

  \medskip

  \noindent{\em For a vertex $v\in V(\tr{i})$,
  $\deg_G(v)-\deg_{\tr{i}}(v)\leq 1$}:
  Via the automorphism $\varphi$, it is sufficient to prove that for $v\in
  V(\tr{0})$, $\deg_G(v)-\deg_{\tr{0}}(v)\leq 1$. As $\deg_G(v_{i} (j))=2$ for
  $i\not\equiv 0 \mod (d+1)$, there is nothing to prove for those vertices. For
  the remaining vertices we have,
  \begin{equation}
    \deg_G(v_{(d+1)i} (j)) = \begin{cases}
	    3 & \text{if } i\in \{0,k\}, \\
	    4 & \text{ otherwise. }
      \end{cases}
  \end{equation}

  First consider the case when $i\in \{0,k\}$. Note that
  $v_{(d+1)i} (j)\in V(\tr{0})$ for $j \in \{0, m_i, \ldots, (d+1)m_i \}$. The
  vertices $v_{(d+1)i}(0)$ have at least two
  neighbours in $\tr{0}$ (one on the path $P_0$, and the other on the cycle $C_i$).
  Thus the condition holds for these vertices. Similarly the vertices
  $v_{(d+1)i} (m_i), v_{(d+1)i} (2m_i) , \ldots , v_{(d+1)i} (d \, m_i)$ have
  degree at least two in $\tr{0}$ (being internal vertices of a path). The vertex
  $v_{(d+1)i}((d+1)m_i)$ has a neighbour $v_{(d+1)i}(d\,m_i)$
  on $\tr{0}$, and an additional one on the outward (resp., inward) path at
  $v_{(d+1)i}(d\, m_i)$ when $i=0$ (resp., $k$). The preceding claim holds
  because $\sigma_1(d)\geq 1$ and $\tau_k(d)\geq 1$. Now consider the case when
  $i\not\in \{0,k\}$. Then for $0 \leq \ell \leq d-1$ the vertices
  $v_{(d+1)i} ((\ell+1)m_i)$ are internal vertices of a path in $\tr{0}$. It has a further neighbour on an inward or an outward path as
  $\tau_{i}(\ell)+\sigma_{i-1}(\ell)\geq 1$ for $0 \leq \ell < d$. Thus the vertices $v_{(d+1)i} ((\ell+1)m_i)$ for $0 \leq \ell < d$ have degree at least $3$ in $\tr{0}$, and hence they satisfy the condition. The vertices
  $v_{(d+1)i} ((d+1)m_i)$ have a neighbour $v_{(d+1)i}(d\,m_i)$ and one neighbour
  each on outward and inward paths at $v_{(d+1)i}((d+1)m_i)$ as condition
  (c) implies both the paths are non-trivial.

  Thus $(G,{\cal D}T)$ fulfils the requirements of Lemma
  \ref{lem:construction2}, and hence yields a neighbourly $(d+1)$-manifold in
  class $\overline{\cal K}(d+1)$.
\end{proof}

\begin{lemma}\label{lem:automorphism}
  The cyclic group $\langle (0, 1, \ldots , n-1) \rangle$ $(\cong \mathbb{Z}_n)$ acts
  transitively on the manifolds ${\cal K}(G(d,k;m_0,\ldots,m_k), {\cal D}T)$ and
  $\partial {\cal K}(G(d,k;m_0,\ldots,m_k), {\cal D}T)$.
\end{lemma}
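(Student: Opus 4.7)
The plan is to show that the cyclic shift $\psi : i \mapsto i + 1 \pmod{n}$ on the vertex set $\{0, 1, \ldots, n-1\}$ of $\mathcal{K}(G(d,k;m_0,\ldots,m_k), \mathcal{D}T)$ is an automorphism of the complex, and that it restricts to the boundary. Once established, the cyclic group it generates is $\mathbb{Z}_n$ and acts transitively on the $n$ vertices by construction. For the boundary, I will use the fact noted in Section~2.2 that members of $\overline{\mathcal{K}}(d+1)$ are stacked manifolds with boundary all of whose vertices lie on $\partial N$, so the vertex set of $\partial \mathcal{K}(G,\mathcal{D}T)$ coincides with that of $\mathcal{K}(G,\mathcal{D}T)$, and any automorphism of the ambient complex restricts to an automorphism of the boundary.

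The crucial step is to identify $\psi$ with the graph automorphism $\varphi$ of $G$ from Equation~(\ref{eq:auto}) via the labeling $u \mapsto \hat{u} = \{i : u \in \tr{i}\}$. Recall from Section~\ref{subsec:subtrees} that $\tr{j} = \varphi^j(\tr{0})$. First I would compute, for any $u \in V(G)$,
\begin{equation*}
  \widehat{\varphi(u)} = \{i : \varphi(u) \in \tr{i}\} = \{i : \varphi(u) \in \varphi^i(\tr{0})\} = \{i : u \in \varphi^{i-1}(\tr{0})\} = \{i : u \in \tr{i-1}\} = \hat{u} + 1.
\end{equation*}
Thus the set of facets $\{\hat{u} : u \in V(G)\}$ is preserved by the shift $\psi$, since $\psi(\hat{u}) = \widehat{\varphi(u)}$ is again a facet. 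Hence $\psi$ extends to a simplicial automorphism of $\mathcal{K}(G,\mathcal{D}T)$.

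From here the remainder is immediate: the automorphism $\psi$ has order $n$, so $\langle \psi \rangle \cong \mathbb{Z}_n$, and it permutes the vertex set $\{0, 1, \ldots, n-1\}$ in a single cycle, giving transitivity on vertices of $\mathcal{K}(G,\mathcal{D}T)$. Every automorphism of $\mathcal{K}(G,\mathcal{D}T)$ sends boundary faces to boundary faces (since being a boundary face is determined intrinsically by the number of facets in which a codimension-one face lies). Because Theorem~\ref{thm:perselect} places $\mathcal{K}(G,\mathcal{D}T)$ in $\overline{\mathcal{K}}(d+1)$, every vertex of $\mathcal{K}(G,\mathcal{D}T)$ appears in $\partial\mathcal{K}(G,\mathcal{D}T)$, so $\langle \psi \rangle$ also acts transitively on the vertex set of the boundary.

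The only part that requires any care is the computation $\widehat{\varphi(u)} = \hat{u}+1$, which is really a bookkeeping exercise about the definition of $\tr{j}$; everything else follows formally. I do not anticipate any genuine obstacle here.
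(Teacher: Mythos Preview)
Your proposal is correct and follows essentially the same approach as the paper: both show that the shift $i\mapsto i+1$ sends the facet $\hat{u}$ to $\widehat{\varphi(u)}$ via the identity ${\mathcal D}T_j=\varphi^j({\mathcal D}T_0)$, and then invoke the fact that all vertices of a member of $\overline{\cal K}(d+1)$ lie on its boundary. Your write-up is slightly more explicit about why automorphisms restrict to the boundary and why the boundary carries all vertices, but the argument is the same.
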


\begin{proof}
  Let $\varphi$ be the automorphism of $G:=G(d,k;m_0,\ldots,m_k)$ as in Equation (\ref{eq:auto}).
  We show that $h:i\rightarrow i+1$ (addition modulo $n$) is an automorphism
  of ${\cal K}(G,{\cal D}T)$. Let $\hat{u}=\{i: u\in {\cal D}T_i\}$ be a facet
  of ${\cal K}(G, {\cal D}T)$. Then $h(\hat{u})=\{i+1: u\in
  {\cal D}T_i\}=\{i+1: \varphi(u)\in {\cal D}T_{i+1}\}=\hat{v}$,
  where $v=\varphi(u)$. Since $\varphi$ is an automorphism of $G$, $v$ is a
  facet of ${\cal K}(G, {\cal D}T)$. This proves
  $\langle \, h \, \rangle = \langle (0,1,\ldots,n-1) \rangle \subseteq
  {\rm Aut}({\cal K}(G, {\cal D}T)$. Since $\partial {\cal K}(G, {\cal D}T)$
  contains all the vertices of ${\cal K}(G, {\cal D}T)$,
  $\langle \, h \, \rangle$ is also contained in the automorphism group
  of $\partial {\cal K}(G, {\cal D}T)$.
\end{proof}

\begin{theorem}\label{thm:tight} $\ $
  \begin{itemize}
    \item[(a)] ${\cal K}(G(d, k; m_0, \ldots, m_k), {\cal D}T)$ and
	$\partial{\cal K}(G(d, k; m_0, \ldots, m_k),{\cal D}T)$ are tight triangulations of manifolds, $d\geq 2$.
    \item[(b)] For $d\geq 3$, $\partial{\cal K}(G(d,k;m_0,\ldots, m_k), {\cal D}T)$ is tight-neighbourly and
	triangulates \linebreak  $(S^{d-1}~\times~S^1)^{\# \beta_1}$ or $(\TPSSDS)^{\# \beta_1}$, where $\beta_1$ is the
	first Betti number as in $\eqref{eq:Betti}$.
    \item[(c)] $\partial{\cal K}(G(2,k; m_0,\ldots, m_k), {\cal D}T)$  triangulates $(S^{1}\times S^1)^{\# \beta_1/2}$
	or $(\KBottle)^{\# \beta_1/2}$.
  \end{itemize}
\end{theorem}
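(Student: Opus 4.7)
The plan is to deduce all three parts from Theorem~\ref{thm:perselect} together with the previously stated structural results about tight, stacked and locally stacked manifolds. Write $N := {\cal K}(G(d,k;m_0,\ldots,m_k), {\cal D}T)$. By Theorem~\ref{thm:perselect}, $N$ is a neighbourly member of $\overline{\cal K}(d+1)$, and hence a neighbourly stacked $(d+1)$-manifold with non-empty boundary. Since $d+1\geq 3$ (as $d\geq 2$), Theorem~\ref{prop:bb} gives tightness of $N$ directly.

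For the boundary, the key observation is that since $N \in \overline{\cal K}(d+1)$, every vertex of $N$ lies on $\partial N$ and ${\rm skel}_{d-1}(N) = {\rm skel}_{d-1}(\partial N)$. In particular every edge of $N$ lies in $\partial N$, and neighbourliness of $N$ descends to $\partial N$. Moreover, $\partial N$ is a closed stacked $d$-manifold. To conclude tightness of $\partial N$ I split on $d$. For $d = 2$, $\partial N$ is a neighbourly triangulated surface, hence tight by the complete-graph characterisation of tight surfaces recalled in the Introduction. For $d = 3$, Theorem~\ref{prop:Mu} gives tight-neighbourliness and Theorem~\ref{prop:bdss} then gives tightness. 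For $d \geq 4$, stacked implies locally stacked, so $\partial N$ is a neighbourly member of ${\cal K}(d)$, and Theorem~\ref{prop:ef} applies. This settles part (a).

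For (b), when $d \geq 3$, Theorem~\ref{prop:Mu} gives that $\partial N$ is tight-neighbourly, and Proposition~\ref{prop:dm} shows it triangulates $(S^{d-1} \times S^1)^{\# m}$ or $(\TPSSDS)^{\# m}$ for some $m \geq 0$. It remains to identify $m$ with the value of $\beta_1$ in Equation~\eqref{eq:Betti}. Using the defining identity of tight-neighbourliness $\binom{f_0 - d - 1}{2} = \binom{d+2}{2}\beta_1$ together with $f_0 = (d+1)((d+2)k+2)+1$ and the short computation $f_0 - d - 1 = (d+1)((d+2)k+1)+1$, we obtain $\beta_1$ exactly as claimed. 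Since in dimension $\geq 3$ both listed topologies have first $\mathbb{Z}_2$-Betti number equal to the number of connected-sum factors, $m = \beta_1$.

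For (c), when $d = 2$, $\partial N$ is a closed stacked surface, so by Proposition~\ref{prop:dm} it triangulates $(S^1\times S^1)^{\# m}$ or $(\KBottle)^{\# m}$ for some $m \geq 0$. The neighbourly surface $\partial N$ has $n = 12k+7$ vertices; the standard count $3f_2 = 2f_1 = 2\binom{n}{2}$ for neighbourly triangulated surfaces gives $\chi(\partial N) = n(7-n)/6$, and hence $\beta_1(\partial N;\mathbb{Z}_2) = 2 - \chi$, which matches the formula in Equation~\eqref{eq:Betti} for $d=2$. Since both candidate surfaces satisfy $\beta_1(\mathbb{Z}_2) = 2m$, we conclude $m = \beta_1/2$. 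The only substantive work throughout is the arithmetic matching $f_0$ with the Betti-number formula; the rest is an assembly of Theorems~\ref{thm:perselect}, \ref{prop:bb}, \ref{prop:ef}, \ref{prop:bdss}, \ref{prop:Mu} and Proposition~\ref{prop:dm}.
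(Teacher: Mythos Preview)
Your proof is correct and follows essentially the same route as the paper: invoke Theorem~\ref{thm:perselect} to place $N$ in $\overline{\cal K}(d+1)$, get tightness of $N$ from Theorem~\ref{prop:bb}, pass neighbourliness and stackedness to $\partial N$, and then read off tightness, tight-neighbourliness, and topology from the standing results together with Proposition~\ref{prop:dm}. The only cosmetic difference is in how you split the tightness argument for $\partial N$: the paper handles $d\neq 3$ uniformly via Theorem~\ref{prop:ef} (which covers $d=2$ as well) and $d=3$ via Theorem~\ref{prop:bds2}, whereas you treat $d=2$ via the surface characterisation and $d=3$ via the chain Theorem~\ref{prop:Mu}~$\Rightarrow$~Theorem~\ref{prop:bdss}; both are valid and the extra arithmetic detail you supply for $\beta_1$ is a nice addition.
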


\begin{proof}
Let $G = G(d,k; m_0,\ldots, m_k)$.
By construction, all the interior faces of ${\cal K}(G,{\cal D}T)$ have dimensions $d+1$ or $d$. Thus, ${\cal K}(G, {\cal D}T)$ is a neighbourly stacked $(d+1)$-manifold with boundary. So, by Theorem~\ref{prop:bb}, ${\cal K}(G,{\cal D}T)$ is $\FF$-tight for any field $\FF$.

Since ${\cal K}(G, {\cal D}T)$ is a neighbourly stacked $(d+1)$-manifold with boundary and $d\geq 2$, it follows that $\partial{\cal K}(G,{\cal D}T)$ is a neighbourly member of ${\mathcal K}(d)$. Therefore, by Theorems \ref{prop:ef} and \ref{prop:bds2}, $\partial{\cal K}(G,{\cal D}T)$ is tight for all $d\geq 2$. This proves part (a).

Now, assume $d\geq 3$. Since ${\cal K}(G, {\cal D}T)$ is a neighbourly stacked $(d+1)$-manifold with boundary, $\partial{\cal K}(G, {\cal D}T)$ is a neighbourly stacked $d$-manifold. Therefore $\partial{\cal K}(G, {\cal D}T)$ is tight-neighbourly by Theorem \ref{prop:Mu}. Considering that $f_0(\partial{\cal K}(G, {\cal D}T)) = (d+1)((d+2)k+1)$ we thus have $\beta_1(\partial{\cal K}(G, {\cal D}T)) = \binom{(d+1)((d+2)k+1)+1}{2}/\binom{d+2}{2}$. Part (b) now follows from Proposition \ref{prop:dm}.

Finally, assume $d=2$. Since $\partial{\cal K}(G(2,k; m_0,\ldots, m_k), {\cal D}T)$ is neighbourly, the first Betti number $\beta_1$ satisfies \eqref{eq:Betti}. Part (c) now follows from Proposition \ref{prop:dm}.
\end{proof}

\subsection{The algorithm} \label{sec:implement}

We describe an optimized algorithm to search for tight triangulations using
Theorem~\ref{thm:perselect}. Given $k\geq
0$ and $n=(d+1)((d+2)k+2)+1$, our task is the following:
\begin{enumerate}[(i)]
\item Find distinct invertible elements $m_0,\ldots,m_k \in \mathbb{Z}_n$.
Without loss of generality, we may choose $m_k=1$.
\item For each such choice of $m_i$, $0\leq i\leq k$, search for a $k$-deck of
permutations $\{(\sigma_i,\tau_i):1\leq i\leq k\}$ of the set
$\{0,1,\ldots ,d\}$ satisfying the conditions in Theorem~\ref{thm:perselect}.
\end{enumerate}

Note that for $k=0$ this process leads to the unique family of tight triangulations
of sphere bundles due to K\"uhnel~\cite{PSM}. In what follows we can thus assume
that $k>0$.

\medskip
We enumerate the sequences $(m_0,\ldots,m_k)$ in a depth-first manner. To
prune the search tree, we use the following fact, which follows from the
proof of Lemma \ref{lem:tree}.
\begin{equation}\label{eq:prune}
\{\pm \ell \, m_i: 1\leq \ell \leq d+1\}\cap
\{\pm \ell \, m_j: 1\leq \ell \leq d+1\} = \emptyset \text{ for }
i\neq j.
\end{equation}

Having determined a sequence $(m_0,\ldots,m_k)$, we look for a deck of $k$
permutation pairs $\{(\sigma_i,\tau_i):1\leq i\leq k\}$. Again, we enumerate the
permutation pairs in a depth-first manner. The observations below help to
economize the search.

From Theorem~\ref{thm:perselect}, it follows that a valid permutation
$\sigma$ should satisfy $\sigma(\ell)=0$ for one of the values
$0 \leq \ell < d$. Accordingly, we say that $\sigma$ is ``of type $\ell$''. Similarly,
we call a permutation pair $(\sigma,\tau)$ ``of type $(\ell,m)$'' if
$\sigma$ is of type $\ell$ and $\tau$ is of type $m$. Since there are $d!$
permutations of each type, there are $(d!)^2$ permutation pairs of each type. As
there are $d^2$ types of permutation pairs, we have $(d \times d!)^2$
permutations to consider at each level. However, we can
reduce this number.

We call permutation pairs of type
$(\ell,m)$ and $(\ell',m')$ {\em compatible} if $m\neq \ell'$. Observe that
the adjacent permutation pairs $(\sigma_i,\tau_i)$ and
$(\sigma_{i-1},\tau_{i-1})$ in a $k$-deck satisfying
Theorem~\ref{thm:perselect} must be compatible. Thus, apart from the first
level in the depth-first search,
we only have to consider $(d-1)\cdot d\cdot (d!)^2$ compatible permutation pairs.

To enable faster access to compatible permutations at each level, we perform a
pre-processing step of storing them by their type. We store $(d!)^2$
permutations of each type in a contiguous block. Then we stack $d^2$ such
blocks to form a linear array of $(d \times d!)^2$ permutation pairs. The blocks
are stacked following the lexicographic ordering of the type of the permutation
pairs they contain. It can be seen that in this scheme, all the permutation
pairs compatible with a given permutation pair occur as contiguous blocks,
possibly wrapping around at the end of the array.

Finally, we store a permutation pair $(\sigma,\tau)$ as the following set,
which we call its {\em treetype}.
\begin{equation}\label{eq:treetype}
\Delta(\sigma, \tau) = \{(p+1, q+1): \sigma(p)+\tau(q)\geq d+1\}.
\end{equation}
The nomenclature ``treetype'' denotes the fact that $\Delta(\sigma,\tau)$ determines the shape of the tree at a particular level. Given a set of $\binom{d+1}{2}$-tuples $S$, it is purely mechanical to recover permutations $\sigma$ and $\tau$ such that their treetype $\Delta(\sigma,\tau)$ equals $S$. This follows from the observation that if $\sigma(p)=\ell$, then $p+1$ appears as the first coordinate in exactly $\ell$ tuples in $S$. Similarly $\tau(q)=m$ implies that $q+1$ occurs as the second coordinate in exactly $m$ tuples in $S$. For example if $2$ appears as the first coordinate in $3$ tuples of $S$, we conclude $\sigma(2)=3$.


\section{Results} \label{sec:results}

\subsection{\boldmath{$2^{d-1} \cdot \lfloor d/2 \rfloor ! \cdot \lfloor (d-1)/2 \rfloor !$} non-isomorphic tight $d$-manifolds}
\label{ssec:family}

In this section we describe a family of tight triangulations of a closed $d$-manifold with a superexponentially increasing number of members for increasing dimension.

\smallskip

\noindent{\it Notation:} We represent a permutation $\sigma$ of
$\{0,\ldots,d\}$ as the $(d+1)$-tuple $(\sigma(0),\ldots,\sigma(d))$. This is not to be confused with the cyclic permutation as commonly used in algebra.


\begin{lemma} \label{lem:family}
Let $G$ be the graph $G(d,1;d+2,1)$, and let ${\cal D}=\{(\beta,\alpha)\} = \{((0, d, b_2, \ldots, b_{d}),$ $(0, a_1, \ldots, a_{d}))\}$ be a $1$-deck of permutations such that $b_{i+1}+b_{d+1-i}=d$ and $a_i+a_{d+1-i}=d+1$ for all $i\geq 1$. Then ${\cal D}$ satisfies the preconditions of Theorem~\ref{thm:perselect}.
\end{lemma}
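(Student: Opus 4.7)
Plan. The strategy is to verify the three conditions of Theorem~\ref{thm:perselect} in turn. Conditions~(ii) and~(iii) are immediate: (ii) is vacuous because $k=1$ makes the index range $2\leq i\leq k$ empty, while for (iii), $\sigma_1(d)=\beta(d)=b_d$ and $\tau_1(d)=\alpha(d)=a_d$ --- since $\beta,\alpha$ are permutations of $\{0,\ldots,d\}$ with the prescribed values $\beta(0)=0$, $\beta(1)=d$, $\alpha(0)=0$, we get $b_d\in\{1,\ldots,d-1\}$ and $a_d\in\{1,\ldots,d\}$, both at least $1$.

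The real content is condition~(i): $\operatorname{sp}(\mathcal{D})=\mathbb{Z}_n$ for $n=(d+1)(d+4)+1=d^2+5d+5$. The proof of Lemma~\ref{lem:tree} gives the a-priori bound $|\operatorname{sp}(\mathcal{D})|\leq n$, so the goal is to exhibit $n$ distinct elements. My key tool is the arithmetic identity $(d+3)(d+2)=n+1$, which makes $d+2$ a unit in $\mathbb{Z}_n$ and furnishes a grid representation: every $m\in\mathbb{Z}_n$ has a unique expression $m=s(d+2)+r$ with $(s,r)\in\{0,\ldots,d+2\}\times\{0,\ldots,d+1\}\setminus\{(d+2,d+1)\}$. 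In these coordinates the contributions $\{\pm\ell:0\leq\ell\leq d+1\}$ of $m_1=1$ fill exactly the boundary columns $s\in\{0,d+2\}$, and the contributions $\{\pm\ell(d+2):0\leq\ell\leq d+1\}$ of $m_0=d+2$ fill the boundary rows $r\in\{0,d+1\}$, together accounting for $4d+5$ distinct elements. The interior block $[1,d+1]\times[1,d]$ contains the remaining $d(d+1)$ points.

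I then recast $\operatorname{sp}(\beta,\alpha)$ in the same coordinates. Any valid pair $(p,q)$ has $p,q\geq 1$ (since $\beta(0)=\alpha(0)=0$), and contributes a positive value $(q+1)(d+2)-(p+1)=q(d+2)+(d+1-p)$ located at grid point $(q,d+1-p)\in[1,d]^2$ and a negative value located at $(d+2-q,p)\in[2,d+1]\times[1,d]$; both lie in the interior, hence are disjoint from the boundary sets. The number of valid pairs is $\sum_p\beta(p)=d(d+1)/2$, producing exactly $d(d+1)$ signed contributions --- precisely the interior count. Condition~(i) therefore reduces to the statement that these $d(d+1)$ signed contributions fill the interior bijectively.

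Collisions among the positive contributions (or among the negative ones) are ruled out by injectivity of their coordinate maps. The delicate --- and main --- obstacle is a positive/negative collision, which would force two distinct valid pairs related by the involution $(p,q)\mapsto(d+1-p,d+2-q)$ on $[1,d]\times[2,d]$. This is where the symmetry hypotheses earn their keep. Using $\beta(j)+\beta(d+2-j)=d$ for $j\in[2,d]$ together with $\beta(0)=0$, $\beta(1)=d$, and $\alpha(i)+\alpha(d+1-i)=d+1$ for $i\in[1,d]$, I would evaluate
\[
\beta(p)+\alpha(q)+\beta(d+1-p)+\alpha(d+2-q)
\]
via a case analysis on whether $p\in\{1,d\}$ or $p\in[2,d-1]$, invoking $\beta(d)=d-b_2$ for the edge cases, $\beta(p)=d-\beta(d+2-p)$ for the interior, and $\alpha(d+2-q)=d+1-\alpha(q-1)$ throughout. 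The target is to show the sum equals the odd number $2d+1$, forcing exactly one of the two summands $\beta(p)+\alpha(q)$ and $\beta(d+1-p)+\alpha(d+2-q)$ to be $\geq d+1$. This rules out every positive/negative collision and establishes condition~(i).
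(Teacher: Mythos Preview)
Your setup through the grid decomposition is clean and correct: the bijection between $\mathbb{Z}_n$ and $\{0,\ldots,d+2\}\times\{0,\ldots,d+1\}\setminus\{(d+2,d+1)\}$, the boundary/interior split, and the reduction of condition~(i) to the claim that for each $(p,q)\in[1,d]\times[2,d]$ exactly one of $(p,q)$ and $(d+1-p,\,d+2-q)$ is valid --- all of this is right. It is also genuinely different from the paper's route, which does not count directly but instead shows that $\operatorname{sp}(\beta,\alpha)$ is invariant under the moves $a_i\leftarrow a_i-1$, $a_{d+1-i}\leftarrow a_{d+1-i}+1$ (and the analogous moves on $\beta$), and then reduces to the single pair $((0,d,\ldots,1),(0,d,\ldots,1))$, for which condition~(i) is verified in~\cite{bdns}.

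The gap is in your final step. The hypotheses give $\alpha(i)+\alpha(d+1-i)=d+1$ and $\beta(j)+\beta(d+2-j)=d$ (for $j\ge 2$), but your target sum involves $\alpha(q)+\alpha(d+\mathbf{2}-q)$ and $\beta(p)+\beta(d+\mathbf{1}-p)$ --- each shifted by one index from what the symmetry controls. Your own substitutions make this visible: for $p\in[2,d-1]$ one has $\beta(d+1-p)=d-\beta(p+1)$ and $\alpha(d+2-q)=d+1-\alpha(q-1)$, so the sum becomes
\[
2d+1+\bigl[\beta(p)-\beta(p+1)\bigr]+\bigl[\alpha(q)-\alpha(q-1)\bigr],
\]
and the bracketed terms have no reason to cancel. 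Concretely, take $d=3$, $\beta=(0,3,2,1)$, $\alpha=(0,1,2,3)$ --- this is one of the two pairs in~\eqref{eq:bdns} and satisfies all hypotheses --- and $p=1$, $q=2$: then $\beta(1)+\alpha(2)+\beta(3)+\alpha(3)=3+2+1+3=9\neq 7$, and indeed \emph{both} $(1,2)$ and $(3,3)$ are valid (each sum is $\ge 4$), so the positive contribution $13$ of $(1,2)$ collides with the negative contribution $-16\equiv 13\pmod{29}$ of $(3,3)$. Thus $|\operatorname{sp}(\beta,\alpha)|<d(d+1)$ here, and your counting argument cannot close. The parity mechanism you propose would work if the symmetry conditions were $\beta(p)+\beta(d+1-p)=d+1$ and $\alpha(q)+\alpha(d+2-q)=d$, but those are not what is assumed.
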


The infinite families of tight triangulations presented in \cite{bdns} by the second and third authors are given  by the permutations
 \begin{align} \label{eq:bdns}
  (\beta, \alpha) = ((0,d,\ldots,1), (0,1,\ldots,d)) & \mbox{ and }
 (\beta, \alpha) = ((0,d,\ldots,1), (0,d,\ldots,1)).
   \end{align}
  Observe that both pairs satisfy the conditions of Lemma \ref{lem:family}, and thus both families are contained in this one.

\begin{proof}[Proof of Lemma~\ref{lem:family}]
Since $k=1$, condition {\em (ii)} is vacuously satisfied. Furthermore, condition {\em (iii)} is always satisfied since both $\beta$ and $\alpha$ never have $0$ in the last position. Hence, the only condition that needs to be verified is condition {\em (i)}, that is, we have to show that for the span ${\rm sp}(\beta, \alpha)=\mathbb{Z}_n$ holds. (Here $n = (d+1)(d+4) +1$.)

For differences $d+2$, $1$ and permutations
$\beta = (0, d, b_2,\ldots, b_{d})$, $\alpha= (0, a_1, \ldots, a_{d})$, we have for the span of $\beta$ and $\alpha$:
\begin{align*}
{\rm sp}(\beta, \alpha) = \{\pm ((q+1)(d+2) - (p+1)) \, : \, 0\leq p, q\leq d, \beta(p) + \alpha(q) \geq d+1\}.
\end{align*}
We show that ${\rm sp}(\beta, \alpha)$ and ${\rm sp}((0,d,\ldots,1), (0,d,\ldots,1))$ are equal for any  pair of permutations $(\beta, \alpha)$ satisfying the conditions in the  statement of the theorem. The theorem statement then follows from the fact  that the pair $((0,d,\ldots,1), (0,d,\ldots,1))$ gives rise to one of the two families from \cite{bdns} (see \eqref{eq:bdns}). In steps (a), \dots, (f) in the proof of Lemma 4.4 in \cite{bdns} it was shown that condition {\em (i)} is satisfied in this case.

\medskip

\noindent {\it Claim:}
Let $\beta = (0, d, b_2,\ldots, b_{d})$ and $\alpha= (0, a_1, \ldots, a_{d})$ be
such that $b_{i+1}+b_{d+1-i}=d$ and $a_i+a_{d+1-i}=d+1$ for all $i\geq 1$.
Then ${\rm sp}(\beta, \alpha)$ does not change when  modifying the vector $\alpha$ by $a_i\leftarrow a_i-1$ and  $a_{d+1-i} \leftarrow a_{d+1-i}+1$.

\medskip

The treetype tuple (see \eqref{eq:treetype}) that we lose by decreasing $a_i$ by one is $(i+1, j+1)$ where $a_i+b_j=d+1$. Similarly the treetype tuple we gain by increasing $a_{d+1-i}$ by one is $(d+2-i, s+1)$ where $a_{d+1-i}+b_s=d$ (it becomes $d+1$ after the increment). Summing, and using $a_i+a_{d+1-i}=d+1$, we get $b_j+b_s=d$. Thus $j+s=d+2$ (note that we are  using the fact that $\beta$ is a permutation). The corresponding differences lost and gained are $\pm d_{lost}$ and $\pm d_{gained}$ where
\begin{align}
    d_{lost} & = (d+2)(j+1) - (i+1), \nonumber \\
    d_{gained} & = (d+2)(s+1) - (d+2-i).
  \end{align}
From the above, we get $d_{lost}+d_{gained} = (d+2)(j+s+1)-1 = (d+2)(d+3)-1 = n = 0\in \mathbb{Z}_n$. So, $\{\pm d_{lost}\} = \{\mp d_{gained}\} \subseteq \mathbb{Z}_n$. Thus the span remains invariant under this modification.

\smallskip

Similarly it can be shown that if $\beta$ is modified analogously, the span
remains invariant as well. Now we transform an arbitrary $(\beta, \alpha)$ to that of the known example by: (i) keeping $\beta$ fixed and changing $\alpha$ to the decreasing permutation by a sequence of moves of $\alpha$ and then (ii) keeping $\alpha$ fixed and changing $\beta$ to the decreasing permutation by a sequence of moves on $\beta$. This proves the lemma.
\end{proof}

\begin{lemma} \label{lem:non-orientable}
Let $d$ be odd and let ${\mathcal D} = \{(\beta,\alpha)\}$ be
a $1$-deck of permutations as in the statement of Lemma~$\ref{lem:family}$. Then $\partial {\mathcal K}(G, {\mathcal D}T)$ is non-orientable.
\end{lemma}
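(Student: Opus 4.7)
By Theorem~\ref{thm:tight}(b), the boundary $\partial {\cal K}(G, {\cal D}T)$ triangulates either $(S^{d-1}\times S^1)^{\#\beta_1}$ or $(\TPSSDS)^{\#\beta_1}$. For odd $d$ the dimension $d-1$ is even, so the antipodal map on $S^{d-1}$ is orientation-reversing; hence $\TPSSDS$ is non-orientable while $S^{d-1}\times S^1$ is orientable. It therefore suffices to show that $\partial {\cal K}(G, {\cal D}T)$ admits no consistent global orientation.

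My plan is to exhibit an explicit orientation-reversing cycle in the dual graph of $\partial {\cal K}$. First I would describe the boundary concretely: by Theorem~\ref{thm:construction}, a $d$-face $\hat{u}\setminus\{i\}$ lies on $\partial {\cal K}$ precisely when $i\in\hat{u}$ and no neighbor $v$ of $u$ in $G$ satisfies $\hat{v}=(\hat{u}\setminus\{i\})\cup\{j\}$; equivalently, $i$ is one of the $d+2-\deg_G(u)$ ``non-adjacent'' indices in $\hat{u}$. Using the explicit description of ${\cal D}T_j=\varphi^j({\cal D}T_0)$ from Definition~\ref{defn:tree}, I would catalogue the boundary facets incident to a chosen vertex together with their $(d-1)$-face adjacencies.

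Next I would construct a short closed sequence of boundary facets $F_0, F_1, \ldots, F_\ell = F_0$, each consecutive pair meeting in a $(d-1)$-face, passing through the ``central column'' of $G$. For odd $d$, the palindromic conditions $b_{i+1}+b_{d+1-i}=d$ and $a_i+a_{d+1-i}=d+1$ force $a_{(d+1)/2}=(d+1)/2$ to be self-paired, while $\beta$ has no self-paired entry; this asymmetry between $\alpha$ and $\beta$ is the combinatorial source of the ``half-twist'' needed for non-orientability. I would pick an orientation for $F_0$, propagate it around the loop using the rule that adjacent facets must induce opposite orientations on their shared $(d-1)$-face, and verify that the orientation returned on $F_\ell$ is the reverse of the initial orientation of $F_0$.

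The main obstacle is the orientation bookkeeping along this loop. At each step one index of the facet is removed and another inserted, contributing a specific sign to the induced permutation on the shared $(d-1)$-face; the task is to verify that, thanks to the unpaired middle of $\alpha$, the total sign accumulated around the chosen loop is odd. This is exactly where the oddness of $d$ enters essentially. A possible alternative would be to establish non-orientability by a chain-level computation of $H_d(\partial {\cal K};\mathbb{Z})$, but the explicit-cycle method appears more tractable given the transitive cyclic $\mathbb{Z}_n$ symmetry of $\partial {\cal K}$ supplied by Lemma~\ref{lem:automorphism}, which reduces the construction of the loop to tracking a single orbit.
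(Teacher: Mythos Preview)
Your proposal is a plan rather than a proof: the actual construction of the orientation-reversing loop and the sign computation are deferred to ``I would catalogue\ldots I would construct\ldots I would verify'', so the argument is incomplete as it stands. More seriously, the heuristic you offer for \emph{where} the non-orientability lives --- the self-paired middle entry $a_{(d+1)/2}=(d+1)/2$ of $\alpha$ and the asymmetry between $\alpha$ and $\beta$ --- is off target. The permutations $\alpha,\beta$ play no role in the non-orientability at all.

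The paper's proof bypasses all boundary-facet bookkeeping. It observes that the outer cycle $C_1\subset G$ is itself the dual graph of the subcomplex
\[
Y_1=\{\{j,j+1,\dots,j+d+1\}:j\in\mathbb{Z}_n\}\subset {\cal K}(G,{\cal D}T),
\]
whose boundary $\partial Y_1=X_n^d$ is precisely K\"uhnel's cyclic triangulation of the $S^{d-1}$-bundle over $S^1$. By \cite[Lemma~5.1]{bdns}, $X_n^d$ is non-orientable whenever $n$ and $d$ are both odd, and here $n=(d+1)(d+4)+1$ is odd for every $d$. Since $|\partial{\cal K}(G,{\cal D}T)|$ is obtained from $|\partial Y_1|$ by attaching handles, the non-orientability of $\partial Y_1$ forces that of $\partial{\cal K}(G,{\cal D}T)$. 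The twist therefore sits entirely inside the cycle $C_1$ and has nothing to do with the palindromic structure of $(\beta,\alpha)$; any $1$-deck satisfying Lemma~\ref{lem:family} would give the same $Y_1$. If you wish to salvage your explicit-cycle approach, the loop to track is the one along $C_1$ --- essentially reproducing the classical parity argument for $X_n^d$ --- rather than one passing through the radial paths.
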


\begin{proof}
By Lemma \ref{lem:family} and Theorem \ref{thm:perselect}, $\partial{\mathcal K}(G, {\mathcal D}T)$ is a member of ${\mathcal K}(d)$ and thus a triangulation of a manifold.

Let $C_i$, ${\mathcal D}T_j$, $\hat{u}$ be as in Subsections \ref{subsec:K(G,T)}, \ref{subsec:dualgr}, \ref{subsec:subtrees} and let $n=(d+1)(d+4)+1$.
Then $C_1$ is the $n$-cycle $v_{d+1}(0)\mbox{-}v_{d+1}(1)\mbox{-} \cdots \mbox{-} v_{d+1}(n-1)\mbox{-}v_{d+1}(0)$.

From the definition of ${\mathcal D}T_j$, it follows that the path $v_{d+1}(j)\mbox{-}v_{d+1}(j+ 1)\mbox{-} \cdots \mbox{-} v_{d+1}(j+(d+1))$ is a part (subgraph) of the tree ${\mathcal D}T_j$ for $0\leq j\leq n-1$. Thus,
\begin{align*}
\hat{v}_{d+1}(j) & := \{i \, : \, v_{d+1}(j)\in {\mathcal D}T_i\} = \{j, j-1, \dots, j-(d+1)\}.
\end{align*}
Hence $C_1$ is the dual graph of the $(d+1)$-dimensional (bounded) complex $Y_1$ given by
\begin{align*}
  Y_1 & = \{\hat{v}_{d+1}(j) \, : \, j \in \mathbb{Z}_n\} = \{\{j, j+1, \dots, j+d+1\} \, : \, j \in \mathbb{Z}_n\}.
\end{align*}
Let $X_n^d = \partial Y_1$. Since both $n$ and $d$ are odd, $X_n^d$ is non-orientable by \cite[Lemma 5.1]{bdns}. (In fact, $X^d_n$ triangulates the non-orientable $S^{d-1}$-bundle $\TPSSDS$ over $S^1$.)

From the construction it follows that $Y_1$ is a submanifold of ${\mathcal K}(G, {\mathcal D}T)$ and $|\partial{\mathcal K}(G, {\mathcal D}T)|$ can be obtained from $|\partial Y_1|$ by attaching handles. (This also follows from the topology of $|\partial{\mathcal K}(G, {\mathcal D}T)|$ described in Theorem \ref{thm:tight}.) Since $\partial Y_1 =X_n^d$ is non-orientable,
it follows that $\partial{\mathcal K}(G, {\mathcal D}T)$ is non-orientable.
\end{proof}

\begin{lemma}  \label{lem:orientable}
Let $d$ be even and let ${\mathcal D} = \{(\beta,\alpha)\}$ be
a $1$-deck of permutations as in the statement of Lemma~$\ref{lem:family}$. Then $\partial {\mathcal K}(G, {\mathcal D}T)$ is orientable.
\end{lemma}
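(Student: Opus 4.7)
My plan is to mirror the structure of Lemma~\ref{lem:non-orientable}, with the parity of $d$ now reversing the orientation conclusion. First observe that $n = (d+1)(d+4)+1$ is always odd: $d+1$ and $d+4$ differ by $3$, so they have opposite parities and their product is even. Keeping the submanifold $Y_1 \subseteq {\mathcal K}(G, {\mathcal D}T)$ with facets $\hat{v}_{d+1}(j) = \{j, j-1, \ldots, j-(d+1)\}$ for $j \in \mathbb{Z}_n$, together with the observation from Lemma~\ref{lem:non-orientable} that $|\partial{\mathcal K}(G, {\mathcal D}T)|$ is obtained from $|\partial Y_1| = |X_n^d|$ by attaching handles, I would apply \cite[Lemma~5.1]{bdns} in the complementary parity case ($n$ odd, $d$ even) to conclude that $X_n^d$ triangulates the orientable manifold $S^{d-1}\times S^1$.

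To promote this to orientability of the full boundary $\partial{\mathcal K}(G, {\mathcal D}T)$, I would invoke Theorem~\ref{thm:tight}(b) (respectively Theorem~\ref{thm:tight}(c) when $d=2$), which restricts the homeomorphism type of $\partial{\mathcal K}(G, {\mathcal D}T)$ to one of two possibilities: an orientable connected sum of trivial $S^{d-1}$-bundles over $S^1$ (respectively tori) or a non-orientable connected sum of twisted bundles (respectively Klein bottles). It then suffices to rule out the non-orientable alternative.

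The cleanest route is to show that the $(d+1)$-manifold-with-boundary ${\mathcal K}(G, {\mathcal D}T)$ is itself orientable, since orientability of a compact manifold with boundary passes to its boundary. The construction of a coherent orientation assigns to each facet $\hat u = \{i : u \in {\mathcal D}T_i\}$ a sign, and then verifies that induced orientations on shared $d$-faces between dual-adjacent facets disagree. Via Theorem~\ref{thm:construction}, two such facets $\hat u, \hat v$ differ in exactly one label, and the consistency condition reduces to a parity computation in which the sign flip per step along each non-contractible cycle in $G$ comes out to $(-1)^d$. Since $d$ is even, this factor is $+1$, which together with $n$ odd (ensuring no wrap-around sign obstruction) yields coherent signs propagating around both cycles $C_0$ and $C_1$.

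The main obstacle is the careful sign verification at each dual edge, particularly distinguishing edges along the orbit cycles $C_0, C_1$ from those along the radial paths $P_j$. Fortunately, the radial paths form trees and cannot contribute a monodromy obstruction; only the two cycles need to be checked, and each reduces to the same parity computation underlying \cite[Lemma~5.1]{bdns}. The parity flip at $d$ even is precisely what separates orientability from the non-orientability established in Lemma~\ref{lem:non-orientable}.
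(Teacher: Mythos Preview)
Your first approach---inferring orientability of $\partial{\mathcal K}(G,{\mathcal D}T)$ from orientability of the submanifold $\partial Y_1 = X_n^d$ together with handle attachment---does not work in this direction. Attaching handles to an orientable manifold can yield a non-orientable one; the asymmetry with Lemma~\ref{lem:non-orientable} is precisely that non-orientability of a piece forces non-orientability of the whole, but not conversely. So the reduction to \cite[Lemma~5.1]{bdns} alone, combined with Theorem~\ref{thm:tight}(b)/(c), does not rule out the non-orientable alternative.

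Your second approach---orienting ${\mathcal K}(G,{\mathcal D}T)$ directly---is indeed what the paper does, but your monodromy sketch has a gap. The dual graph $G=G(d,1;d+2,1)$ has first Betti number $n+1$, not $2$: beyond $C_0$ and $C_1$ there are the ``mixed'' cycles that run up one radial path $P_j$, along an arc of $C_1$, down another $P_{j'}$, and back along an arc of $C_0$. These must be checked as well, and their monodromy depends on the specific permutations $\alpha,\beta$ (through which vertex labels appear and disappear as you move along a radial edge), not just on the parity of $d$. Your assertion that ``radial paths form trees and cannot contribute a monodromy obstruction'' conflates the individual paths $P_j$ with the role their edges play in the cycle space of $G$.

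The paper sidesteps the monodromy bookkeeping by writing each facet $\hat v_p(j)$ explicitly as an ordered tuple in terms of $\alpha^{-1}$ and $\beta^{-1}$, declaring that ordering to be the positive orientation, and then verifying coherence across all three edge types by direct incidence computations. The parity of $d$ enters in the check along the orbit cycles, where the product of incidence numbers is $(-1)^{d+1}$, equal to $-1$ exactly when $d$ is even; the radial-edge check $(-1)^{d-p}\cdot(-1)^{d-p+1}=-1$ holds unconditionally. This explicit construction is what is missing from your outline.
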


\begin{proof}
By Lemma \ref{lem:family} and Theorem \ref{thm:perselect}, $\partial{\mathcal K}(G, {\mathcal D}T)$ is a member of ${\mathcal K}(d)$ and thus a triangulation of a manifold. It suffices to show that ${\mathcal K}(G, {\mathcal D}T)$ is orientable.

We prove that ${\mathcal K}(G, {\mathcal D}T)$ is orientable,
by describing an explicit coherent orientation (i.e., by orienting simplices such that the incidence numbers satisfy $[\sigma_1, \sigma_1\cap\sigma_2] = -[\sigma_2, \sigma_1\cap\sigma_2]$ for any two $(d+1)$-dimensional simplices $\sigma_1, \sigma_2$ connected by a common $d$-simplex).

\begin{figure}[ht]
\centering
\scalebox{0.8}{
\begin{tikzpicture}
\coordinate (v0j) at (0,0);
\coordinate (v0last) at (10.5,0);
\coordinate (vtop) at (0,6);
\coordinate (vtopright) at (10,6);

\draw [thick] (v0j) -- (v0last);
\draw [thick] (vtop) -- (vtopright);
\draw [thick] (v0j) --(0,2);
\draw [thick]  (vtop) -- (0,4);
\draw [dotted] (0,2) -- (0,4);

\foreach \i in {0,1,2,3,4,5,6,7,8,9}{
\draw (1.5+\i, 0) node {$\bullet$};
\draw (1+\i, 6) node {$\bullet$};
}

\foreach \i in {0,1,2,4,5,6}{
\draw (0,\i) node {$\bullet$};
}

\coordinate (v1j) at (0,1);
\coordinate (vdj) at (0,5);
\coordinate (vdj2) at (2,6);
\coordinate (v0l) at (5.5,0);
\coordinate (vtopl) at (5.0, 6);
\coordinate (v0tip) at (10.5, 1);
\coordinate (vtoptip) at (10, 5);
\coordinate (v1j2) at (2,1);
\coordinate (vtopmid) at (5,3);
\coordinate (v0mid) at (5.5,2);

\draw [thick] (vdj2) -- (2, 4) (2,2) -- (v1j2);
\draw [dotted] (2,4) -- (2,2);

\foreach \i in {5,4,2,1} {
	\draw (2,\i) node {$\bullet$};
}

\draw [thick] (vtopl) -- (vtopmid);	
\foreach \i in {5,4,3} {
	\draw (5,\i) node {$\bullet$};
}

\draw [thick] (v0l) -- (v0mid);
\foreach \i in {1,2} {
	\draw (5.5, \i) node {$\bullet$};
}

\draw [thick] (vtopright) -- (vtoptip)
				(v0last) -- (v0tip);

\draw (vtoptip) node {$\bullet$};
\draw (v0tip) node {$\bullet$};

\draw [dotted] (3,6) -- (3,3.5) (4,6) -- (4,4.5)
               (6,6) -- (6,4) (7,6) -- (7,5.5) (8,6) -- (8,4)
			   (9,6) -- (9,4.5);

\draw [dotted] (2.5,0) -- (2.5,0.5) (3.5, 0) -- (3.5, 1.5)
				(4.5, 0) -- (4.5,4) (6.5, 0) -- (6.5, 3)
				(7.5, 0) -- (7.5, 2.5) (8.5, 0) -- (8.5, 1.5)
				(9.5, 0) -- (9.5, 4.5);

\draw (v0j) node[left] {$v_0(j)$};
\draw (v1j) node[left] {$v_1(j)$};
\draw (vdj) node[left] {$v_d(j)$};
\draw (vtop) node[left] {$v_{d+1}(j)$};
\draw (v1j2) node[right] {$v_1(j+2)$};
\draw (vdj2) node[above right] {$v_{d+1}(j+2)$};

\draw (vtopl) node[above right] {$v_{d+1}(j+\ell)$};
\draw (v0l) node[below right] {$v_0(j+\ell(d+2))$};
\draw (vtopmid) node[right] {$v_{d+1-b_{\ell-1}}(j+\ell)$};
\draw (v0mid) node[right] {$v_{a_{\ell-1}}(j+\ell(d+2))$};
\draw (v0last) node[below right] {$v_0(j+(d+1)(d+2))$};
\draw (vtopright) node[above right] {$v_{d+1}(j+d+1)$};

\end{tikzpicture}
}
\caption{Tree $D{\cal T}_j$ in $G(d,1;d+2,1)$}
\label{fig:tree2}
\end{figure}

Observe that it follows from $d$ even that $Y_1$ (with dual graph $C_1$), as defined in the proof of Lemma \ref{lem:non-orientable}, is orientable (see \cite[Lemma 5.1]{bdns}).
We choose the positively oriented simplices as $\varepsilon \langle j, j-1, \dots, j-d-1\rangle$, where $\varepsilon = \pm 1$ is equal for all $j$. Similarly, the cycle $C_0$ yields an orientable manifold $Y_0$ with analogously oriented simplices.

From Definition 3.1, we have (see Figure \ref{fig:tree2})
\begin{align*}
{\mathcal D}T_j = &\,\, \left [\,v_{0}(j)\mbox{-}v_{0}(j+ (d+2))\mbox{-} \cdots \mbox{-} v_{0}(j+(d+1)(d+2)) \,\right ] \\
& \cup \,\, \left [\,v_{d+1}(j)\mbox{-} \cdots \mbox{-} v_{d+1}(j+(d+1))\, \right ] \\
& \cup \,\, P_j \\
& \cup \,\, \left [\bigcup _{1\leq \ell\leq d+1}\Big (\mbox{outward path of length } \alpha(\ell-1) \mbox{ at vertex } v_0(j+\ell(d+2)) \Big ) \right ] \\
& \cup \,\, \left [\bigcup _{1\leq \ell\leq d+1}\Big (\mbox{inward path of length } \beta(\ell-1) \mbox{ at vertex } v_{d+1}(j+\ell)\Big ) \right ].
\end{align*}
Hence we have $v_p(j+\ell(d+2)) \in {\mathcal D}T_j$ if and only if
$p\leq \alpha(\ell -1)$ and it follows from the cyclic symmetry that
$v_p(j) \in {\mathcal D}T_{j-\ell(d+2)}$ if and only if $p \leq \alpha(\ell -1)$. Similarly $v_p(j+\ell) \in {\mathcal D}T_j$ and
$v_p(j) \in {\mathcal D}T_{j-\ell}$ if and only if
$p\geq d+1-\beta(\ell -1)$. Thus we can write
\begin{align*}
  \hat{v}_p(j) & = \{m \, : \, v_p(j) \in {\mathcal D}T_m\} \\
   & = \{j-\ell(d+2) \, : \, p \leq \alpha(\ell-1)\} \,\cup \,\{j\}\, \cup\, \{j-\ell \, : \, p\geq d+1-\beta(\ell -1)\} \\
   & = \{j-(\ell+1)(d+2) : \ell \in \alpha^{-1}(\{p, \dots, d\})\}\, \cup \,\{j\} \\
& \qquad \cup \, \{j-(\ell+1) : \ell \in \beta^{-1}(\{d+1-p, \dots, d\})\} \nonumber \\
   & = \{j-(t_d+1)(d+2), \dots, j-(t_p+1)(d+2), j, j-(s_{d+1-p}+1), \dots, j - (s_d+1)\},
\end{align*}
where $s_i= \beta^{-1}(i)$, $t_i= \alpha^{-1}(i)$, $0\leq i\leq d$,
and $0\leq p\leq d+1$. Note that in the special cases
$p \in \{ 0, d+1 \}$ we have
\begin{align*}
  \hat{v}_{d+1}(j) & =  \{j, j-(s_{0}+1), j-(s_{1}+1), \dots, j - (s_d+1)\} \\
  & = \{j, j-1, \dots, j-(d+1)\}, \mbox{ and }\\
  \hat{v}_0(j) & = \{j-(t_{d}+1)(d+2), j-(t_{d-1}+1)(d+2),\dots, j - (t_0+1)(d+2), j\} \\
  & = \{j-(d+1)(d+2), j-d(d+2), \dots, j-(d+2),j\}.
\end{align*}

We assign a positive orientation to the ordered $(d+1)$-simplices
\begin{align*}\label{eq:orient}
  + \hat{v}_p(j) & = \langle j-(t_d+1)(d+2), \dots, j-(t_p+1)(d+2), j, j-(s_{d+1-p}+1), \dots, j - (s_d+1)\rangle.
\end{align*}
For the simplices in $Y_0$ and $Y_1$ this is equivalent to the
following signs with respect to the standard ordering of
$\hat{v}_{0}$ and $\hat{v}_{d+1}$:
\begin{align*}
  + \hat{v}_{d+1}(j) & =  \langle j, j-(s_{0}+1), j-(s_{1}+1), \dots, j - (s_d+1)\rangle \\
  & = \varepsilon(\beta)  \langle j, j-1, \dots, j-(d+1)\rangle,\\
  + \hat{v}_0(j) & = \langle j-(t_{d}+1)(d+2), \dots, j - (t_1+1)(d+2), j - (t_0+1)(d+2), j\rangle  \\
  &= \varepsilon(\alpha) \langle j-(d+1)(d+2), \dots, j-2(d+2), j-(d+2), j\rangle,
\end{align*}
where $\varepsilon(\beta)$ and $\varepsilon(\alpha)$ are the signs
of the permutations $\beta$ and $\alpha$ (note that all
$(d+1)$-simplices in $Y_0$ and $Y_1$ share the same sign).

\smallskip

It remains to show that the orientation chosen above is coherent for
all $d$-simplices of ${\mathcal K}(G, {\mathcal D}T)$.

Each $d$-simplex corresponds to one edge of $G$. There are three
types of edges in $G$. The ones in the inner cycle $C_0$, the
ones in the outer cycle $C_1$, and the ones in the vertical paths
$P_j$, $0 \leq j \leq n$.

We write
\begin{align*}
\gamma(j) := & \hat{v}_{d+1}(j) \cap \hat{v}_{d+1}(j+1) && \mbox{ for edges in } C_1, \\
\mu(j) := & \hat{v}_{0}(j) \cap \hat{v}_{0}(j+1) && \mbox{ for edges in } C_0, \mbox{ and} \\
\xi_p(j) := & \hat{v}_{p}(j) \cap \hat{v}_{p+1}(j),\, 0\leq p\leq d, && \mbox{ for edges in } P_j, \,j\in \mathbb{Z}_n.
\end{align*}
Then,
\begin{align*}
[\hat{v}_{d+1}(j), \gamma(j)] &\cdot [\hat{v}_{d+1}(j+1), \gamma(j)]  \\
 & = \varepsilon(\beta)^2 \cdot [\langle j, j-1, \dots, j-d-1\rangle, \gamma(j)]\cdot [\langle j+1, j, \dots, j-d\rangle, \gamma(j)] \\
& = \varepsilon(\beta)^2 \cdot (-1) = -1.
\end{align*}
An analogous computation yields $[\hat{v}_{0}(j), \mu(j)] = - [\hat{v}_{0}(j+1), \mu(j)]$.

For the vertical paths assume without loss of generality that
\begin{align*}
+\xi_p(j) &= \langle j-(t_d+1)(d+2), \dots, j-(t_{p+1}+1)(d+2), j, j-(s_{d+1-p}+1), \dots, j - (s_d+1) \rangle.
\end{align*}
Then
\begin{align*}
[\hat{v}_p(j), \xi_p(j)] = & (-1)^{d-p}
= -(-1)^{d-p+1} = - [\hat{v}_{p+1}(j), \xi_p(j)].
\end{align*}
It follows that the chosen orientation is coherent.
\end{proof}

\begin{theorem} \label{thm:family}
For $d\geq 2$, there are at least $2^{d-1}\cdot\lfloor d/2 \rfloor!\cdot \lfloor
(d-1)/2 \rfloor!$ non-isomorphic tight triangulations of $(S^{d-1}\times S^1)^{\# m}$,
$d$ even, and $(\TPSSDS)^{\# m}$, $d$ odd, where $m = \binom{(d+1)(d+3)+1}{2}/\binom{d+2}{2}$.
\end{theorem}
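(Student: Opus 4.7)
My plan is to prove Theorem~\ref{thm:family} in three stages: enumerate the $1$-decks $(\beta,\alpha)$ produced by Lemma~\ref{lem:family}, invoke the tightness and topology results already in place, and argue that distinct decks give non-isomorphic triangulations.

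\textbf{Enumerating valid $1$-decks.} Fix $G = G(d, 1; d+2, 1)$ and consider $\beta = (0, d, b_2, \ldots, b_d)$ with $b_{i+1} + b_{d+1-i} = d$ for all $i \geq 1$. This condition matches positions $\{2, \ldots, d\}$ into pairs $(i+1, d+1-i)$ and matches the nonzero values $\{1, \ldots, d-1\}$ into pairs summing to $d$. If $d$ is odd all positions pair up, giving $((d-1)/2)!\cdot 2^{(d-1)/2}$ choices; if $d$ is even the middle position $d/2+1$ is forced to carry the value $d/2$, giving $((d-2)/2)!\cdot 2^{(d-2)/2}$ choices. Both numbers coincide with $\lfloor (d-1)/2 \rfloor!\cdot 2^{\lfloor (d-1)/2 \rfloor}$. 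A parallel count for $\alpha = (0, a_1, \ldots, a_d)$ with $a_i + a_{d+1-i} = d+1$ yields $\lfloor d/2 \rfloor!\cdot 2^{\lfloor d/2 \rfloor}$ choices (the fixed middle value $(d+1)/2$ appears only when $d$ is odd). The product is $2^{d-1}\cdot \lfloor d/2 \rfloor!\cdot \lfloor (d-1)/2 \rfloor!$.

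\textbf{Tightness and topological type.} By Lemma~\ref{lem:family}, every such pair verifies the hypotheses of Theorem~\ref{thm:perselect}. Hence the associated ${\cal K}(G, {\cal D}T)$ is a neighbourly member of $\overline{\cal K}(d+1)$, and by Theorem~\ref{thm:tight}(a),(b) its boundary $\partial {\cal K}(G, {\cal D}T)$ is a tight-neighbourly triangulation of $(S^{d-1} \times S^1)^{\# m}$ or $(\TPSSDS)^{\# m}$; specialising Equation~\eqref{eq:Betti} to $k = 1$ and $n = (d+1)(d+4)+1$ gives exactly the first Betti number $m = \binom{(d+1)(d+3)+1}{2}/\binom{d+2}{2}$ from the statement. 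Lemmas~\ref{lem:orientable} and~\ref{lem:non-orientable} then pick out the orientable sphere bundle for $d$ even and the non-orientable one for $d$ odd, matching the statement.

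\textbf{Non-isomorphism.} This is the main obstacle. The key point is that the pair $(\beta,\alpha)$ is combinatorially recoverable from the triangulation together with its $\mathbb{Z}_n$-action (Lemma~\ref{lem:automorphism}). Indeed, the dual graph of the bounded complex reconstructs $G$; its two cycles $C_0, C_1$ are distinguishable because edges corresponding to them have ``step sizes'' $d+2$ and $1$ under the generator of $\mathbb{Z}_n$; and the inward and outward path lengths of the tree ${\cal D}T_0$ hanging off these cycles then read off $\beta$ and $\alpha$. Any simplicial isomorphism between two such triangulations induces a graph automorphism of $G$, and the only such automorphisms are the $\mathbb{Z}_n$-rotations and a single order-two reflection; the reflection is incompatible with the normalisation $\beta(1) = d$, which fixes a preferred direction on $C_1$. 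Consequently $(\beta,\alpha)$ is a bona fide simplicial-isomorphism invariant, and the $2^{d-1}\cdot \lfloor d/2 \rfloor!\cdot \lfloor (d-1)/2 \rfloor!$ pairs produce pairwise non-isomorphic tight triangulations. The delicate step is the last one, since one must rule out ``hidden'' isomorphisms coming from automorphisms of $G$ that are not of rotation type; this is accomplished by tracking the asymmetry between the two cycles.
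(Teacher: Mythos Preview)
Your count of admissible $1$-decks and your derivation of tightness and topological type via Theorem~\ref{thm:tight} and Lemmas~\ref{lem:non-orientable}, \ref{lem:orientable} match the paper's argument exactly, and the arithmetic in your enumeration is correct.

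The divergence is in your third stage. The paper's own proof does not argue non-isomorphism at all: it simply counts the decks $(\beta,\alpha)$, records orientability according to the parity of $d$, and declares that the result follows from Theorem~\ref{thm:tight}. You attempt strictly more by trying to show that distinct decks yield non-isomorphic boundaries, but this attempt has genuine gaps. Two concrete ones: (i) you pass from $\partial{\cal K}(G,{\cal D}T)$ to the dual graph of the filling ${\cal K}(G,{\cal D}T)$, which requires that the filling be recoverable from the boundary --- fine for $d\geq 4$ by Kalai's theorem, but not justified here for $d\in\{2,3\}$; (ii) your description of ${\rm Aut}(G)$ as ``the $\mathbb{Z}_n$-rotations and a single order-two reflection'' is asserted without proof, and as an abstract graph $G(d,1;d+2,1)$ consists of two $n$-cycles joined by $n$ radial paths of length $d+1$, so nothing you have written rules out, for instance, an automorphism interchanging the two cycles. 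The ``step size $d+2$ versus $1$'' distinction you invoke is a property of a \emph{chosen} $\mathbb{Z}_n$-action, not an intrinsic graph invariant, so it cannot be used to separate $C_0$ from $C_1$ until that action has itself been shown to be canonical. Even granting both points, you would still owe a verification that no reflection or cycle-swap carries one admissible pair $(\beta,\alpha)$ to another on your list. In summary: the paper leaves non-isomorphism tacit, and your sketch, while pointing in a reasonable direction, does not yet close that gap.
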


\begin{proof}
Consider the family $\{\partial{\cal K}(G(d,1; d+2, 1), {\cal D}T) \, : {\mathcal D} \}$ as in Lemma \ref{lem:family}. A simple count of all permutations $(\beta, \alpha)$
satisfying the pre-conditions of Lemma \ref{lem:family} shows that the number of members in this family is $2^{d-1}\cdot\lfloor d/2 \rfloor!\cdot \lfloor (d-1)/2 \rfloor!$.

By Lemmas \ref{lem:non-orientable} and \ref{lem:orientable}, for each 1-deck ${\mathcal D}=\{(\beta, \alpha)\}$ of permutations as in the statement of Lemma \ref{lem:family},
$\partial{\cal K}(G(d, 1; d+2, 1), {\cal D}T)$ is orientable if $d$ is even and non-orientable if $d$ is odd. The result now follows from Theorem \ref{thm:tight}.
\end{proof}

\subsection{Further sporadic examples} \label{sec:sp_eg}

In this section we summarise the findings of the algorithm in dimensions
three, four, and five for small values of $k$. (Since all neighbourly
triangulations of surfaces are known to be tight, we do not focus on
dimension two in this section. Nonetheless, the output of the algorithm for $d=2$ and
$k \leq 5$ is summarised in the first table below.) In each case the algorithm
terminated, meaning that all possible configurations were exhaustively tested.
Higher values of $d$ or $k$ turned out to have infeasible running times.

A selection of the examples are listed in Section \ref{sec:summary}
in terms of their corresponding $(d+1)$-dimensional handlebodies
by providing $m$-vectors $(m_0, m_1,  \ldots , m_{k-1}, 1)$, $k$-decks of
permutations, treetypes, and a system of
orbit representatives of the $(d+1)$-dimensional handlebody $M^{d+1}_{k, i}$ in
$\overline{\cal K}(d+1)$ under the vertex transitive $\mathbb{Z}_n$-symmetry
for $3\leq d\leq 5$. These are bounded tight manifolds. Their boundaries are
tight closed manifolds $\partial M^{d+1}_{k, i}$ in ${\cal K}(d)$ for $3\leq d\leq 5$.
The full list of examples is given in \cite{examples}. The complexes will also
be included in the next update of the {\em GAP}-package {\em simpcomp} \cite{simpcompISSAC,simpcomp}.

As an illustration
we discuss one new solitary example with $(d,k,n)=(3,2,49)$, which we denote by
$M^{d+1}_{k,i} = M^4_{2,1}$, in greater detail. For this example, we have
$(m_0,m_1,m_2)=(41,20,1)$ (see appendix). Also, we have the following
treetypes:
\begin{align*}
\Delta(\sigma_1, \tau_1) &=
\{ ( 1, 4 ), ( 3, 2 ), ( 3, 4 ), ( 4, 2 ), ( 4, 3 ), ( 4, 4 ) \}, \\
\Delta(\sigma_2,\tau_2) &=
\{ ( 1, 3 ), ( 2, 3 ), ( 2, 4 ), ( 4, 1 ), ( 4, 3 ), ( 4, 4 ) \},
\end{align*}
where ${\cal D}=\{(\sigma_1,\tau_1),(\sigma_2,\tau_2)\}$ is the $2$-deck of
permutations of $\{0,1,2,3\}$. It can be seen that we must have,
\begin{align*}
\sigma_1 = (1,2,0,3), \tau_1=(1,0,3,2); \sigma_2 = (1,0,2,3), \tau_2=(0,2,1,3).
\end{align*}

Figure \ref{fig:tree} shows the tree ${\cal D}T_0$ as an induced subgraph of
the graph $G(3,2;41,20,1)$.
Orbit representatives of the facets of
$M^4_{2,1}={\cal K}(G(3,2;41,20,1), {\cal D}T)$
under the group action of $\mathbb{Z}_{49} = \langle (0,1,\cdots,48) \rangle$
are given by:
\begin{align*}
    &\langle 0, 1, 2, 3, 4 \rangle,    \langle 0, 1, 3, 4, 31\rangle,  \langle 0, 1, 28, 37, 46\rangle, \langle 0, 4, 11, 31, 40\rangle,  \langle 0, 9, 18, 29, 38\rangle,\\
    & \quad \langle 0, 6, 15, 24, 41\rangle, \langle 0, 6, 16, 24, 41\rangle, \langle 0, 8, 16, 24, 32\rangle,   \langle 0, 5, 11, 20, 29 \rangle.
\end{align*}

\subsection{Summary of known examples} \label{sec:summary}

In this section we present a brief summary of the tight triangulations in Section~\ref{ssec:family}, Section~\ref{sec:sp_eg} and the literature. The full list of new tight triangulations is given in \cite{examples}, and includes the examples presented in Tables~\ref{tab:4_1} to \ref{tab:6_1}.

\bigskip
\medskip

\noindent
Neighbourly triangulations of surfaces generated by the algorithm:

\begin{center}
\begin{tabular}{|c|c|c|c|}
\hline
$k$ & $n$ & \#(solutions) & Remarks \\
\hline
0 & 7 & 1 & M\"obius' torus \\
\hline
1 & 19 & 2 & Both from \cite{bdns} \\
\hline
2 & 31 & 1 & New \\
\hline
3 & 43 & 4 & \ditto \\
\hline
4 & 55 & 1 & \ditto \\
\hline
5 & 67 & 56 & \ditto \\
\hline
\end{tabular}
\end{center}

\bigskip

\noindent
Examples of tight three-manifolds:

\begin{center}
\begin{tabular}{|c|c|c|c|}
\hline
$k$ & $n$ & \#(solutions) & Remarks \\
\hline
0 & 9 & 1 & Walkup \cite{wa}, K\"uhnel \cite{PSM} \\ 
\hline
1 & 29 & 6 & 4 in  Section~\ref{ssec:family}, including 2 in \cite{bdns}, and 2 in \cite{examples} \\
\hline
2 & 49 & 1 & \cite{examples}, also in Table~\ref{tab:4_1} \\
\hline
3 & 69 & 15 & \cite{examples}, including one in Table~\ref{tab:4_1} \\
\hline
4 & 89 & 41 & \ditto \\
\hline
5 & 109 & 72 & \ditto \\
\hline
\end{tabular}
\end{center}

\bigskip
\noindent
Examples of tight four-manifolds:

\begin{center}
\begin{tabular}{|c|c|c|c|}
\hline
$k$ & $n$ & \#(solutions) & Remarks \\
\hline
0 & 11 & 1 & K\"uhnel \cite{PSM} \\
\hline
1 & 41 & 16 &  16 in Section~\ref{ssec:family}, including 2 in \cite{bdns} \\
\hline
2 & 71 & 2 & \cite{examples}, including one in Table~\ref{tab:5_2} \\
\hline
3 & 101 & 2 & \ditto \\
\hline
\end{tabular}
\end{center}

\bigskip

\noindent
Examples of tight five-manifolds:

\begin{center}
\begin{tabular}{|c|c|c|c|}
\hline
$k$ & $n$ & \#(solutions) & Remarks \\
\hline
0 & 13 & 1 & K\"uhnel \cite{PSM} \\
\hline
1 & 55 & 90 &  64 in Section~\ref{ssec:family}, including 2 in \cite{bdns}, and 26 in \cite{examples} \\
\hline
2 & 97 & 13 & \cite{examples}, including one in Table~\ref{tab:5_2} \\
\hline
\end{tabular}
\end{center}

\begin{longtable}{|@{\hspace{0.05cm}}l@{\hspace{0.05cm}}|@{}l@{}|}
\caption{Some tight and minimal 4-dimensional handlebodies} \label{tab:4_1} \\

\endfirsthead

\multicolumn{2}{l}{ \tablename \,\,\thetable{} -- continued from previous page}
\\ \hline \endhead
\hline
\multicolumn{2}{r}{{continued on next page --}}
\\ \endfoot

\endlastfoot\hline\hline
{\scriptsize $M^{4}_{1,5}$}&{\scriptsize
\begin{tabular}{ll}
 $m$-vector:&
$\begin{array}{l}
(12,1) \end{array}$ \end{tabular}} \\
\hline
{\scriptsize Perms.:}&${\scriptsize \begin{array}{ll}
\sigma_{1} = (2,0,1,3),&\tau_{1} = (1,2,0,3)
\end{array}}$ \\
\hline
{\scriptsize Treetypes:}& ${\scriptsize \begin{array}{l}
\Delta (\sigma_{1},\tau_{1}) = \{
(1,2),
(1,4),
(3,4),
(4,1),
(4,2),
(4,4)\}
\end{array}}$ \\
\hline
{\scriptsize Orb. reps.:} & {\scriptsize $\begin{array}{llllll}
\langle 0, 1, 2, 3, 4\rangle,
&\langle 0, 1, 3, 4, 19\rangle,
&\langle 0, 1, 4, 19, 24\rangle,
&\langle 0, 4, 12, 19, 24\rangle,
&\langle 0, 5, 10, 17, 22\rangle&
\end{array}$} \\
\hline \hline
{\scriptsize $M^{4}_{2,1}$}&{\scriptsize
\begin{tabular}{ll}
 $m$-vector:&
$\begin{array}{l}
(41,20,1) \end{array}$ \end{tabular}} \\
\hline
{\scriptsize Perms.:}&${\scriptsize \begin{array}{lll}
\sigma_{2} = (1,0,2,3),&\tau_{2} = (0,2,1,3);\\
\sigma_{1} = (1,2,0,3),&\tau_{1} = (1,0,3,2)
\end{array}}$ \\
\hline
{\scriptsize Treetypes:}& ${\scriptsize \begin{array}{l}
\Delta (\sigma_{2},\tau_{2}) = \{
(1,4),
(3,2),
(3,4),
(4,2),
(4,3),
(4,4)\},\\
\Delta (\sigma_{1},\tau_{1}) = \{
(1,3),
(2,3),
(2,4),
(4,1),
(4,3),
(4,4)\}
\end{array}}$ \\
\hline
{\scriptsize Orb. reps.:} & {\scriptsize $\begin{array}{l@{}l@{}l@{}l@{}l}
\langle 0, 1, 2, 3, 4\rangle,\,\,
&\langle 0, 1, 3, 4, 31\rangle,\,\,
&\langle 0, 1, 28, 37, 46\rangle,\,\,
&\langle 0, 4, 11, 31, 40\rangle,\,\,
&\langle 0, 9, 18, 29, 38\rangle,
\\
\langle 0, 6, 15, 24, 41\rangle,\,\,
&\langle 0, 6, 16, 24, 41\rangle,\,\,
&\langle 0, 8, 16, 24, 32\rangle,\,\,
&\langle 0, 5, 11, 20, 29\rangle&
\end{array}$} \\
\hline \hline
{\scriptsize $M^{4}_{3,1}$}&{\scriptsize
\begin{tabular}{ll}
 $m$-vector:&
$\begin{array}{l}
(64,25,8,1) \end{array}$ \end{tabular}} \\
\hline
{\scriptsize Perms.:}&${\scriptsize \begin{array}{ll}
\sigma_{3} = (0,2,3,1),&\tau_{3} = (0,3,2,1);\\
\sigma_{2} = (3,2,0,1),&\tau_{2} = (3,2,0,1);\\
\sigma_{1} = (1,0,2,3),&\tau_{1} = (3,1,0,2)
\end{array}}$ \\
\hline
{\scriptsize Treetypes:}& ${\scriptsize \begin{array}{l}
\Delta (\sigma_{3},\tau_{3}) = \{
(2,2),
(2,3),
(3,2),
(3,3),
(3,4),
(4,2)\},\\
\Delta (\sigma_{2},\tau_{2}) = \{
(1,1),
(1,2),
(1,4),
(2,1),
(2,2),
(4,1)\},\\
\Delta (\sigma_{1},\tau_{1}) = \{
(1,1),
(3,1),
(3,4),
(4,1),
(4,2),
(4,4)\}
\end{array}}$ \\
\hline
{\scriptsize Orb. reps.:} & {\scriptsize $\begin{array}{l@{}l@{}@{}l@{}l@{}l@{}}
\langle 0, 1, 2, 3, 4\rangle,\,\,
&\langle 0, 1, 2, 14, 67\rangle,\,\,
&\langle 0, 1, 14, 22, 67\rangle,\,\,
&\langle 0, 3, 16, 24, 32\rangle,\,\,
&\langle 0, 6, 25, 31, 50\rangle,
\\
\langle 0, 5, 11, 30, 36\rangle,\,\,
&\langle 0, 5, 11, 36, 54\rangle,\,\,
&\langle 0, 8, 16, 24, 32\rangle,\,\,
&\langle 0, 7, 44, 52, 60\rangle,\,\,
&\langle 0, 8, 16, 25, 50\rangle,
\\
\langle 0, 6, 25, 44, 52\rangle,\,\,
&\langle 0, 5, 10, 41, 59\rangle,\,\,
&\langle 0, 5, 10, 15, 20\rangle&&
\end{array}$} \\
\hline \hline
{\scriptsize $M^{4}_{4,1}$}&{\scriptsize
\begin{tabular}{ll}
 $m$-vector:&
$\begin{array}{l}
(36,78,20,7,1) \end{array}$ \end{tabular}} \\
\hline
{\scriptsize Perms.:}&${\scriptsize \begin{array}{ll}
\sigma_{4} = (3,0,1,2),&\tau_{4} = (1,2,0,3);\\
\sigma_{3} = (0,2,1,3),&\tau_{3} = (1,0,3,2);\\
\sigma_{2} = (0,3,2,1),&\tau_{2} = (2,0,1,3);\\
\sigma_{1} = (2,3,0,1),&\tau_{1} = (2,0,3,1)
\end{array}}$ \\
\hline
{\scriptsize Treetypes:}& ${\scriptsize \begin{array}{l}
\Delta (\sigma_{4},\tau_{4}) = \{
(1,1),
(1,2),
(1,4),
(3,4),
(4,2),
(4,4)\},\\
\Delta (\sigma_{3},\tau_{3}) = \{
(2,3),
(2,4),
(3,3),
(4,1),
(4,3),
(4,4)\},\\
\Delta (\sigma_{2},\tau_{2}) = \{
(2,1),
(2,3),
(2,4),
(3,1),
(3,4),
(4,4)\},\\
\Delta (\sigma_{1},\tau_{1}) = \{
(1,1),
(1,3),
(2,1),
(2,3),
(2,4),
(4,3)\}
\end{array}}$ \\
\hline
{\scriptsize Orb. reps.:} & {\scriptsize $\begin{array}{l@{}l@{}l@{}l@{}l}
\langle 0, 1, 2, 3, 4\rangle,\,\,
&\langle 0, 1, 3, 4, 28\rangle,\,\,
&\langle 0, 1, 4, 14, 28\rangle,\,\,
&\langle 0, 1, 7, 14, 28\rangle,\,\,
&\langle 0, 7, 14, 21, 28\rangle,
\\
\langle 0, 7, 14, 46, 75\rangle,\,\,
&\langle 0, 9, 23, 37, 69\rangle,\,\,
&\langle 0, 12, 34, 53, 70\rangle,\,\,
&\langle 0, 17, 34, 53, 70\rangle,\,\,
&\langle 0, 11, 22, 41, 58\rangle,
\\
\langle 0, 11, 22, 41, 67\rangle,\,\,
&\langle 0, 8, 40, 60, 69\rangle,\,\,
&\langle 0, 9, 29, 49, 69\rangle,\,\,
&\langle 0, 5, 16, 38, 49\rangle,\,\,
&\langle 0, 5, 20, 38, 49\rangle,
\\
\langle 0, 5, 20, 40, 49\rangle,
&\langle 0, 11, 22, 33, 44\rangle&&&
\end{array}$} \\
\hline \hline
{\scriptsize $M^{4}_{5,1}$}&{\scriptsize
\begin{tabular}{ll}
 $m$-vector:&
$\begin{array}{l}
(91,26,86,75,8,1) \end{array}$ \end{tabular}} \\
\hline
{\scriptsize Perms.:}&${\scriptsize \begin{array}{ll}
\sigma_{5} = (0,3,1,2),&\tau_{5} = (1,0,2,3);\\
\sigma_{4} = (3,2,0,1),&\tau_{4} = (1,2,0,3);\\
\sigma_{3} = (3,0,2,1),&\tau_{3} = (1,3,0,2);\\
\sigma_{2} = (1,0,3,2),&\tau_{2} = (3,0,2,1);\\
\sigma_{1} = (2,3,0,1),&\tau_{1} = (3,2,0,1)
\end{array}}$ \\
\hline
{\scriptsize Tree-\linebreak types:}& ${\scriptsize \begin{array}{l}
\Delta (\sigma_{5},\tau_{5}) = \{
(2,1),
(2,3),
(2,4),
(3,4),
(4,3),
(4,4)\},\\
\Delta (\sigma_{4},\tau_{4}) = \{
(1,1),
(1,2),
(1,4),
(2,2),
(2,4),
(4,4)\},\\
\Delta (\sigma_{3},\tau_{3}) = \{
(1,1),
(1,2),
(1,4),
(3,2),
(3,4),
(4,2)\},\\
\Delta (\sigma_{2},\tau_{2}) = \{
(1,1),
(3,1),
(3,3),
(3,4),
(4,1),
(4,3)\},\\
\Delta (\sigma_{1},\tau_{1}) = \{
(1,1),
(1,2),
(2,1),
(2,2),
(2,4),
(4,1)\}
\end{array}}$ \\
\hline
{\scriptsize Orb. \linebreak reps.:} & {\scriptsize $\begin{array}{l@{}l@{}l@{}l@{}l}
\langle 0, 1, 2, 3, 4\rangle,\,\,
&\langle 0, 1, 2, 30, 107\rangle,\,\,
&\langle 0, 2, 4, 24, 32\rangle,\,\,
&\langle 0, 2, 8, 24, 32\rangle,\,\,
&\langle 0, 7, 17, 63, 75\rangle,
\\
\langle 0, 7, 34, 41, 75\rangle,\,\,
&\langle 0, 7, 34, 41, 97\rangle,\,\,
&\langle 0, 8, 16, 24, 32\rangle,\,\,
&\langle 0, 8, 16, 32, 82\rangle,\,\,
&\langle 0, 8, 16, 41, 82\rangle,
\\
\langle 0, 7, 34, 42, 75\rangle,\,\,
&\langle 0, 9, 23, 61, 92\rangle,\,\,
&\langle 0, 9, 23, 69, 92\rangle,\,\,
&\langle 0, 17, 40, 63, 86\rangle,\,\,
&\langle 0, 11, 34, 51, 97\rangle,
\\
\langle 0, 5, 31, 45, 83\rangle,\,\,
&\langle 0, 18, 36, 62, 88\rangle,\,\,
&\langle 0, 5, 31, 57, 83\rangle,\,\,
&\langle 0, 5, 31, 57, 96\rangle,\,\,
&\langle 0, 15, 36, 54, 72\rangle,
\\
\langle 0, 18, 36, 54, 72\rangle&&&&
\end{array}$} \\
\hline \hline
\end{longtable}

\bigskip
\bigskip
\begin{longtable}{|@{\hspace{0.05cm}}l@{\hspace{0.05cm}}|@{}l@{}|}
\caption{Some tight and minimal 5-dimensional handlebodies} \label{tab:5_2} \\

\endfirsthead

\multicolumn{2}{l}{ \tablename \,\,\thetable{}
 -- continued from previous page}
\\ \hline \endhead
\hline
\multicolumn{2}{r}{{continued on next page --}}
\\ \endfoot

\endlastfoot\hline\hline
{\scriptsize $M^{5}_{1,1}$}&{\scriptsize
\begin{tabular}{ll}
 $m$-vector:&
$\begin{array}{l}
(6,1) \end{array}$ \end{tabular}} \\
\hline
{\scriptsize Perms.:}&${\scriptsize \begin{array}{ll}
\sigma_{1} = (0,1,2,3,4),&\tau_{1} = (0,4,3,2,1)
\end{array}}$ \\
\hline
{\scriptsize Treetypes:}& ${\scriptsize \begin{array}{l}
\Delta (\sigma_{1},\tau_{1}) = \{
(2,2),
(3,2),
(3,3),
(4,2),
(4,3),
(4,4),
(5,2),
(5,3),
(5,4),
(5,5)\}
\end{array}}$ \\
\hline
{\scriptsize Orb. reps.:} & {\scriptsize $\begin{array}{l@{}l@{}l}
\langle 0, 1, 2, 3, 4, 5\rangle,\,\,
&\langle 0, 1, 2, 3, 10, 39\rangle,\,\,
&\langle 0, 1, 2, 9, 15, 38\rangle,
\\
\langle 0, 1, 8, 14, 20, 37\rangle,\,\,
&\langle 0, 5, 12, 18, 24, 30\rangle,\,\,
&\langle 0, 6, 12, 18, 24, 30\rangle
\end{array}$} \\
\hline \hline
{\scriptsize $M^{5}_{2,1}$}&{\scriptsize
\begin{tabular}{ll}
 $m$-vector:&
$\begin{array}{l}
(44,8,1) \end{array}$ \end{tabular}} \\
\hline
{\scriptsize Perms.:}&${\scriptsize \begin{array}{ll}
\sigma_{2} = (0,3,1,4,2),&\tau_{2} = (0,4,3,2,1);\\
\sigma_{1} = (1,2,0,3,4),&\tau_{1} = (0,2,4,1,3)
\end{array}}$ \\
\hline
{\scriptsize Treetypes:}& ${\scriptsize \begin{array}{l}
\Delta (\sigma_{2},\tau_{2}) = \{
(2,2),
(2,3),
(2,4),
(3,2),
(4,2),
(4,3),
(4,4),
(4,5),
(5,2),
(5,3)\},\\
\Delta (\sigma_{1},\tau_{1}) = \{
(1,3),
(2,3),
(2,5),
(4,2),
(4,3),
(4,5),
(5,2),
(5,3),
(5,4),
(5,5)\}
\end{array}}$ \\
\hline
{\scriptsize Orb. reps.:} & {\scriptsize $\begin{array}{l@{}l@{}l@{}l}
\langle 0, 1, 2, 3, 4, 5\rangle,\,\,
&\langle 0, 1, 2, 3, 14, 69\rangle,\,\,
&\langle 0, 1, 12, 20, 67, 69\rangle,\,\,
&\langle 0, 2, 4, 16, 24, 32\rangle,
\\
\langle 0, 4, 16, 24, 32, 40\rangle,\,\,
&\langle 0, 7, 16, 32, 40, 61\rangle,\,\,
&\langle 0, 7, 17, 32, 40, 61\rangle,\,\,
&\langle 0, 6, 27, 37, 44, 54\rangle,
\\
\langle 0, 7, 17, 34, 44, 61\rangle,\,\,
&\langle 0, 8, 16, 24, 32, 40\rangle,\,\,
&\langle 0, 8, 16, 32, 40, 61\rangle&
\end{array}$} \\
\hline \hline
{\scriptsize $M^{5}_{3,1}$}&{\scriptsize
\begin{tabular}{ll}
 $m$-vector:&
$\begin{array}{l}
(62,43,12,1) \end{array}$ \end{tabular}} \\
\hline
{\scriptsize Perms.:}&${\scriptsize \begin{array}{ll}
\sigma_{3} = (0,3,2,4,1),&\tau_{3} = (3,4,2,0,1);\\
\sigma_{2} = (3,0,2,1,4),&\tau_{2} = (2,4,0,3,1);\\
\sigma_{1} = (0,1,2,3,4),&\tau_{1} = (1,0,3,4,2)
\end{array}}$ \\
\hline
{\scriptsize Treetypes:}& ${\scriptsize \begin{array}{l}
\Delta (\sigma_{3},\tau_{3}) = \{
(2,1),
(2,2),
(2,3),
(3,1),
(3,2),
(4,1),
(4,2),
(4,3),
(4,5),
(5,2)\},\\
\Delta (\sigma_{2},\tau_{2}) = \{
(1,1),
(1,2),
(1,4),
(3,2),
(3,4),
(4,2),
(5,1),
(5,2),
(5,4),
(5,5)\},\\
\Delta (\sigma_{1},\tau_{1}) = \{
(2,4),
(3,3),
(3,4),
(4,3),
(4,4),
(4,5),
(5,1),
(5,3),
(5,4),
(5,5)\}
\end{array}}$ \\
\hline
{\scriptsize Orb. reps.:} & {\scriptsize $\begin{array}{l@{}l@{}l@{}l@{\hspace{0.05cm}}}
\langle 0, 1, 2, 3, 4, 5\rangle,\,\,
&\langle 0, 1, 2, 3, 22, 99\rangle,\,\,
&\langle 0, 1, 2, 10, 22, 99\rangle,\,\,
&\langle 0, 2, 4, 12, 24, 36\rangle,
\\
\langle 0, 4, 12, 24, 36, 60\rangle,\,\,
&\langle 0, 6, 39, 55, 78, 94\rangle,\,\,
&\langle 0, 6, 39, 64, 78, 94\rangle,\,\,
&\langle 0, 7, 23, 46, 62, 85\rangle,
\\
\langle 0, 12, 24, 36, 48, 60\rangle,\,\,
&\langle 0, 12, 24, 50, 65, 77\rangle,\,\,
&\langle 0, 11, 26, 41, 53, 77\rangle,\,\,
&\langle 0, 11, 26, 41, 53, 84\rangle,
\\
\langle 0, 13, 28, 43, 71, 86\rangle,\,\,
&\langle 0, 13, 28, 46, 71, 85\rangle,\,\,
&\langle 0, 13, 28, 46, 71, 86\rangle,\,\,
&\langle 0, 11, 26, 41, 54, 84\rangle
\end{array}$} \\
\hline \hline
\end{longtable}

\begin{longtable}{|@{\hspace{0.05cm}}l@{\hspace{0.05cm}}|@{}l@{}|}
\caption{Some tight and minimal 6-dimensional handlebodies} \label{tab:6_1} \\

\endfirsthead

\multicolumn{2}{l}{ \tablename \,\,\thetable{}
 -- continued from previous page}
\\ \hline \endhead
\hline
\multicolumn{2}{r}{{continued on next page --}}
\\ \endfoot

\endlastfoot\hline\hline
{\scriptsize $M^{6}_{1,65}$}&{\scriptsize
\begin{tabular}{ll}
 $m$-vector:&
$\begin{array}{l}
(16,1) \end{array}$ \end{tabular}} \\
\hline
{\scriptsize Perms.:}&${\scriptsize \begin{array}{ll}
\sigma_{1} = (3,0,4,5,1,2),&\tau_{1} = (1,2,4,0,5,3)
\end{array}}$ \\
\hline
{\scriptsize Treetypes:}& ${\scriptsize \begin{array}{l@{}l@{}l}
\Delta (\sigma_{1},\tau_{1}) &\,\,= \{&
(1,3),
(1,5),
(1,6),
(3,2),
(3,3),
(3,5),
(3,6), \\ &&
(4,1),
(4,2),
(4,3),
(4,5),
(4,6),
(5,5),
(6,3),
(6,5)\}
\end{array}}$ \\
\hline
{\scriptsize Orb. reps.:} & {\scriptsize $\begin{array}{l@{}l@{}l@{}l}
\langle 0, 1, 2, 3, 4, 5, 6\rangle,\,\,
&\langle 0, 1, 2, 3, 22, 52, 53\rangle,\,\,
&\langle 0, 1, 3, 4, 6, 25, 48\rangle,\,\,
&\langle 0, 1, 3, 4, 25, 41, 48\rangle,
\\
\langle 0, 1, 22, 29, 38, 45, 52\rangle,\,\,
&\langle 0, 4, 16, 25, 32, 41, 48\rangle,\,\,
&\langle 0, 7, 14, 23, 30, 39, 46\rangle&
\end{array}$} \\
\hline \hline
{\scriptsize $M^{6}_{2,1}$}&{\scriptsize
\begin{tabular}{ll}
 $m$-vector:&
$\begin{array}{l}
(87,7,1) \end{array}$ \end{tabular}} \\
\hline
{\scriptsize Perms.:}&${\scriptsize \begin{array}{ll}
\sigma_{2} = (1,4,5,0,2,3),&\tau_{2} = (0,5,4,3,2,1);\\
\sigma_{1} = (2,5,4,3,0,1),&\tau_{1} = (5,4,2,3,0,1)
\end{array}}$ \\
\hline
{\scriptsize Treetypes:}& ${\scriptsize \begin{array}{l@{}l@{}l}
\Delta (\sigma_{2},\tau_{2}) &\,\,= \{&
(1,2),
(2,2),
(2,3),
(2,4),
(2,5),
(3,2),
(3,3), \\ &&
(3,4),
(3,5),
(3,6),
(5,2),
(5,3),
(6,2),
(6,3),
(6,4)\},\\
\Delta (\sigma_{1},\tau_{1}) &\,\,= \{&
(1,1),
(1,2),
(2,1),
(2,2),
(2,3),
(2,4),
(2,6), \\ &&
(3,1),
(3,2),
(3,3),
(3,4),
(4,1),
(4,2),
(4,4),
(6,1)\}
\end{array}}$ \\
\hline
{\scriptsize Orb. reps.:} & {\scriptsize $\begin{array}{l@{}l@{}l@{}l}
\langle 0, 1, 2, 3, 4, 5, 6\rangle,\,\,
&\langle 0, 1, 2, 3, 5, 6, 14\rangle,\,\,
&\langle 0, 1, 3, 4, 12, 19, 95\rangle,\,\,
&\langle 0, 1, 4, 12, 19, 26, 95\rangle,
\\
\langle 0, 1, 12, 19, 26, 33, 95\rangle,\,\,
&\langle 0, 3, 14, 21, 28, 35, 42\rangle,\,\,
&\langle 0, 7, 14, 21, 28, 35, 42\rangle,\,\,
&\langle 0, 7, 14, 21, 28, 42, 87\rangle,
\\
\langle 0, 7, 14, 21, 28, 77, 87\rangle,\,\,
&\langle 0, 7, 14, 43, 63, 73, 83\rangle,\,\,
&\langle 0, 7, 43, 53, 63, 73, 83\rangle,\,\,
&\langle 0, 10, 20, 30, 40, 54, 77\rangle,
\\
\langle 0, 10, 20, 30, 40, 50, 60\rangle&&&
\end{array}$} \\
\hline \hline
\end{longtable}

{\small

 }


\begin{thebibliography}{99}


\bibitem{Alexandrov38ClassClosedSurf}
A. D. Alexandrov, \newblock {O}n a class of closed surfaces,
\newblock {\em Recueil Math. ({M}oscow)} {\bf 4} (1938), 69--72.

\bibitem{BaTight}
B. Bagchi, A tightness criterion for homology manifolds with or without boundary,
{\em Euro. J. Combin.} {\bf 46} (2015), 10--15.

\bibitem{bd10}
B. Bagchi, B. Datta, On Walkup's class ${\cal K}(d)$ and a minimal triangulation of $(S^{\hspace{.2mm}3}
\mbox{$\times \hspace{-2.5mm}_{-}$} \, S^{\hspace{.1mm}1})^{\#3}$, {\em Discrete Math.} {\bf 311} (2011), 989--995.

\bibitem{bd17}
B. Bagchi, B. Datta, On $k$-stellated and $k$-stacked spheres, {\em Discrete
Math.} {\bf 313} (2013), 2318--2329.

\bibitem{bd16}
B. Bagchi, B. Datta, On stellated spheres and a tightness
criterion for combinatorial manifolds, {\em Euro. J. Combin.} {\bf 36}
(2014), 294--313.

\bibitem{BDS}
B. Bagchi, B. Datta, J. Spreer, Tight triangulations of closed $3$-manifolds,
{\em Euro. J. Combin.} {\bf 54} (2016), 294--313.

\bibitem{BDS2016}
B. Bagchi, B. Datta, J. Spreer, A characterization of tightly triangulated
$3$-manifolds, arXiv:1601.00065 [math.GT].

\bibitem{BBDSS2016}
B. Bagchi, B. A. Burton, B. Datta, N. Singh, J. Spreer, Efficient algorithms
to decide tightness. {\em 32nd International Symposium on Computational Geometry
(SoCG 2016)}. In: {\em Leibniz International Proceedings in Informatics (LIPICS)}
{\bf 51} (2016) 12:1--12:15.

\bibitem{bdss}
B. A. Burton, B. Datta, N. Singh, J. Spreer, Separation index of graphs and
stacked 2-spheres, {\em J. Combin. Theory (A)}
{\bf 136} (2015), 184--197.

\bibitem{examples}
B. A. Burton, B. Datta, N. Singh, J. Spreer, Full list of examples from:
{\em A construction principle for tight and minimal triangulations of manifolds},
\texttt{https://arxiv.org/abs/1511.04500v2/anc}.


\bibitem{Chern57TotCurvImmMnf}
S. S. Chern, R. K. Lashof,
\newblock {O}n the total curvature of immersed manifolds,
\newblock {\em Amer. J. Math.} {\bf 79} (1957), 306--318.

\bibitem{dm}
B. Datta, S. Murai, {\em On stacked triangulated manifolds}, arXiv:1407.6767  [math.GT].

\bibitem{bdns}
B. Datta, N. Singh, An infinite family of tight triangulations of manifolds,
{\em J. Combin. Theory (A)} {\bf 120} (2013), 2148--2163.

\bibitem{ef}
F. Effenberger, Stacked polytopes and tight triangulations of manifolds, {\em J. Combin. Theory} (A) {\bf 118} (2011), 1843--1862.

\bibitem{simpcompISSAC}
F. Effenberger, J. Spreer, \emph{{\tt simpcomp} -- a {\tt GAP} toolbox for simplicial complexes}, {\em ACM Communications in Computer Algebra} {\bf 44 (4)} (2010), 186--189.

\bibitem{simpcomp}
F. Effenberger, J. Spreer, \emph{{\tt simpcomp} -- a {\tt GAP} toolkit for simplicial complexes}, {V}ersion 2.1.6, 2009 -- 2016, \texttt{https://github.com/simpcomp-team/simpcomp}.

\bibitem{Jungerman80MinTrigs}
M. Jungerman, G. Ringel, Minimal triangulations on orientable surfaces, {\em Acta Math.} {\bf 145} (1980), 121--154.

\bibitem{ka}
G. Kalai, Rigidity and the lower bound theorem 1, {\em Invent.  math.} {\bf 88} (1987), 125--151.

\bibitem{PSM}
W. K\"{u}hnel, {\em Tight Polyhedral Submanifolds and Tight Triangulations}, Lecture Notes in Mathematics {\bf 1612}, Springer-Verlag, Berlin, 1995.

\bibitem{Kuehnel99CensusTight}
W. K{\"u}hnel, F. H. Lutz, A census of tight triangulations, {\em Period. Math. Hungar.} {\bf 39 (1--3)} (1999), 161--183.

\bibitem{Kuiper59ImmMinTotAbsCurv}
N. H. Kuiper,
\newblock {I}mmersions with minimal total absolute curvature,
\newblock In: {\em Colloque {G}\'eom. {D}iff. {G}lobale ({B}ruxelles, 1958)},
  pp. 75--88, Centre Belge Rech. Math., Louvain, 1959.

\bibitem{LSS}
F. H. Lutz, T. Sulanke, E. Swartz, $f$-vector of 3-manifolds, {\em Electron. J. Comb.} {\bf 16} (2009), \#R\,13, 1--33.

\bibitem{Milnor50RelBettiHypersurfIntGaussCurv}
J. Milnor,
\newblock {O}n the relationship between the {B}etti numbers of a hypersurface
  and an integral of its {G}aussian curvature (1950),
\newblock In: {\em Collected papers {V}ol. 1, {G}eometry}, pp. 15--26,
  Publish or Perish Inc., Houston, TX, 1994.

\bibitem{Mu}
S. Murai, Tight combinatorial manifolds and graded Betti numbers, {\em Collect. Math.} {\bf 66} (2015), 367--386.

\bibitem{NS}
I. Novik, E. Swartz, Socles of Buchsbaum modules, complexes and
posets, {\em Adv. in Math.} {\bf 222} (2009), 2059--2084.

\bibitem{Ringel55NonOrientable}
G. Ringel, Wie man die geschlossenen nichtorientierbaren Fl\"achen in m\"oglichst wenig Dreiecke zerlegen kann, {\em Math. Ann.} {\bf 130} (1955), 317--326.

\bibitem{si}
N. Singh, Strongly minimal triangulations of $(S^{\hspace{.2mm}3} \times S^{\hspace{.2mm}1})^{\#3}$ and $(S^{\hspace{.2mm}3} \mbox{$\times \hspace{-2.5mm}_{-}$} \, S^{\hspace{.1mm}1})^{\#3}$, {\em Proc. Indian Acad. of Sci. (Math Sci.)} {\bf 125} (2015), 79--102.

\bibitem{sp}
J. Spreer, Necessary conditions for the tightness of odd-dimensional combinatorial manifolds, {\em Euro. J. Combin.} {\bf 51} (2016), 475--491.

\bibitem{wa}
D. W. Walkup, The lower bound conjecture for 3- and 4-manifolds, {\em Acta Math.} {\bf 125} (1970), 75--107.

\end{thebibliography}
\end{document}